\newtheorem{theorem}{Theorem}[section]
\newtheorem{lemma}[theorem]{Lemma}
\newtheorem{proposition}[theorem]{Proposition}
\newtheorem{corollary}[theorem]{Corollary}
\theoremstyle{definition}
\newtheorem{definition}{Definition}[section]
\newtheorem*{fact}{Fact}
\theoremstyle{remark}
\numberwithin{equation}{section}
\def\N{{\mathbb
N}}
\DeclareMathOperator{\spn}{span}
\DeclareMathOperator{\spann}{span}
\newcommand{\conv}{\textnormal{conv}}
\newcommand{\calL}{{\mathcal
L}}
\newcommand{\diam}{\hbox{diam}\,}
\newcommand{\xast}{x^{\ast}}
\newcommand{\yast}{y^{\ast}}
\newcommand{\Xast}{X^{\ast}}
\newcommand{\Ts}{T^{\ast}}
\newcommand{\Sast}{S^{\ast}}
\newcommand{\us}{u^{\ast}}
\newcommand{\vs}{v^{\ast}}
\newcommand{\Xs}{X^{\ast}}
\newcommand{\xs}{x^{\ast}}
\newcommand{\Xss}{X^{\ast\ast}}
\newcommand{\xss}{x^{\ast\ast}}
\newcommand{\Ys}{Y^{\ast}}
\newcommand{\ys}{y^{\ast}}
\newcommand{\eps}{\varepsilon}
\newcommand{\lxy}{\mathcal{L}(X,Y)}
\begin{document}

\title{Rough norms in spaces of operators}
\date{26.04.2013}

\author{Rainis Haller}
\curraddr{}
\thanks{This research was supported by institutional research funding IUT20-57 of the Estonian Ministry of Education and Research.}

\author{Johann Langemets}
\curraddr{}
\thanks{}

\author{M\" art P\~oldvere}
\curraddr{}
\thanks{}

\subjclass[2010]{Primary 46B20, 46B22}

\keywords{Rough norm, average rough norm, octahedral norm, slice}

\date{}

\dedicatory{}

\begin{abstract}
We investigate sufficient and necessary conditions for the space of bounded linear operators between two Banach spaces to be rough or average rough. Our main result is that $\calL(X,Y)$ is $\delta$-average rough whenever $X^\ast$ is $\delta$-average rough and $Y$ is alternatively octahedral. This allows us to give a unified improvement of two theorems by Becerra Guerrero, L\'{o}pez-P\'{e}rez, and Rueda Zoca [J. Math. Anal. Appl. 427 (2015)].

\end{abstract}

\maketitle
\section{Introduction}\label{sec: 0. Introduction}
All Banach spaces considered in this paper are non-trivial and over the real field. The closed unit ball of a Banach space $X$ is denoted by $B_X$ and its unit sphere by $S_X$. The dual space of $X$ is denoted by $X^\ast$, and the Banach space of all bounded linear operators acting from $X$ to another Banach space $Y$ by $\calL(X,Y)$.

\begin{definition}
Let $X$ be a Banach space and $\delta>0$. The space $X$ is said to be
\begin{itemize}
\item \emph{$\delta$-rough} \cite{leach_differentiable_1972} if, for every $x \in S_X$,
\[
\limsup_{\|y\|\to0} \frac{\|x+y\|+\|x-y\|-2}{\|y\|}\geq\delta;
\]
\item \emph{$\delta$-average rough} \cite{deville_dual_1988} if, whenever $n\in\N$ and $x_1,\dotsc,x_n \in S_X$,
\[
\limsup_{\|y\|\to0}\frac{1}{n}\sum_{i=1}^n \frac{\|x_i+y\|+\|x_i-y\|-2}{\|y\|}\geq\delta.
\]
\end{itemize}
The space $X$ is said to be \emph{non-rough}, if there is no $\eps>0$ such that $X$ is $\eps$-rough.
\end{definition}

A dual characterization of roughness is well known. The space $X$ is $\delta$-rough if and only if the diameter of every weak$^{\ast}$ slice of $B_{\Xast}$ is greater than or equal to $\delta$ \cite{john_rough_1978}. The space $X$ is $\delta$-average rough if and only if the diameter of every convex combination of weak$^{\ast}$ slices of $B_{\Xast}$ is greater than or equal to $\delta$ \cite{deville_dual_1988}.
	
 Banach spaces which are $2$-average rough are exactly the \emph{octahedral} ones (see \cite{becerra_guerrero_octahedral_2014}, \cite{deville_dual_1988}, and \cite{godefroy_metric_1989}). A weaker version of octahedrality was introduced in \cite{haller_duality_2015} and it was shown that a Banach space $X$ is \emph{weakly octahedral} if and only if the diameter of every non-empty relatively weak$^{\ast}$ open subset of $B_{\Xast}$ is 2 \cite[Theorem~2.8]{haller_duality_2015}.

\begin{definition}
A Banach space $X$ is said to be
\begin{itemize}
\item \emph{octahedral} (see \cite{godefroy_metric_1989} and \cite[Proposition~2.2]{haller_duality_2015})
		if, whenever $n\in\N$, $x_1,\dots,x_n\in S_X$, and $\eps>\nobreak0$,
		there is~a $y\in S_X$ such that
		\[
		\|x_i+y\|\geq 2-\eps\quad\text{for all $i\in\{1,\dots,n\}$;}
		\]		
\item \emph{weakly octahedral} (see \cite[Proposition~2.6]{haller_duality_2015}) if, whenever $n\in\N$, $x_1,\dots,x_n\in S_X$, $\xs\in B_{\Xs}$, and $\eps>\nobreak0$,	there is~a $y\in S_X$ such that
		\[
		\|x_i+ty\|\geq(1-\eps)\bigl(|\xs(x_i)|+t\bigr)\quad\text{for all $i\in\{1,\dots,n\}$ and $t>0$.}
		\]
		\end{itemize}
\end{definition}

Our note is motivated by the recent paper \cite{becerra_guerrero_octahedral_2015}, where octahedrality of the space of bounded linear operators is studied. In Section~\ref{sec: sufficient conditions}, we give a unified improvement of the following Theorems~\ref{thm: L(X,Y) OH when Y OH} and \ref{thm: L(X,Y) OH when Y has P1} obtained in \cite{becerra_guerrero_octahedral_2015}. Moreover, we study their quantified versions in terms of roughness and average roughness (see Theorem~\ref{thm: L(X,Y) OH when Y has P2}).

\begin{theorem}[{\cite[Theorem~3.5]{becerra_guerrero_octahedral_2015}}]\label{thm: L(X,Y) OH when Y OH}
Let $X$ and $Y$ be Banach spaces, and let $H$ be a closed subspace of $\lxy$ containing the finite rank operators. If $\Xs$ and $Y$ are octahedral, then $H$ is octahedral.
\end{theorem}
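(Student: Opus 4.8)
My plan is to verify the defining property of octahedrality for $H$ directly: given $T_1,\dots,T_n\in S_H$ and $\eps>0$, I must produce a single $T\in S_H$ with $\|T_i+T\|\ge 2-\eps$ for every $i$. The idea is to take $T$ to be a norm-one \emph{rank-one} operator $\xs\otimes y\colon x\mapsto\xs(x)\,y$, where $\xs\in S_{\Xs}$ and $y\in S_Y$ are still to be chosen; such a $T$ automatically lies in $H$ because $H$ contains all finite rank operators, and $\|\xs\otimes y\|=\|\xs\|\,\|y\|=1$. To bound $\|T_i+T\|$ from below I will, for each $i$, exhibit a vector $v_i\in S_X$ and a functional $\ws_i\in S_{\Ys}$ with
\[
\ws_i\bigl((T_i+T)(v_i)\bigr)=\ws_i(T_iv_i)+\xs(v_i)\,\ws_i(y),
\]
which forces $\|T_i+T\|\ge 2-O(\eps)$ as soon as the three scalars $\ws_i(T_iv_i)$, $\xs(v_i)$, and $\ws_i(y)$ are each close to $1$.

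The choices must be made in a carefully fixed order to avoid circularity. First I would pick, for each $i$, a vector $u_i\in S_X$ with $\|T_iu_i\|>1-\eps$ and set $y_i:=T_iu_i/\|T_iu_i\|\in S_Y$. Applying octahedrality of $Y$ to $y_1,\dots,y_n$ yields $y\in S_Y$ with $\|y_i+y\|\ge2-\eps$, and hence for each $i$ a norming functional $\ws_i\in S_{\Ys}$ with $\ws_i(y_i)>1-\eps$ and $\ws_i(y)>1-\eps$; this secures the third scalar. The crucial transfer to $\Xs$ is then made through the adjoints: put $\xi_i^\ast:=\Ts_i\ws_i\in\Xs$, so that $\xi_i^\ast(u_i)=\ws_i(T_iu_i)=\|T_iu_i\|\,\ws_i(y_i)>(1-\eps)^2$, whence $1\ge\|\xi_i^\ast\|>(1-\eps)^2$ and $\eta_i^\ast:=\xi_i^\ast/\|\xi_i^\ast\|\in S_{\Xs}$ is well defined. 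Now applying octahedrality of $\Xs$ to $\eta_1^\ast,\dots,\eta_n^\ast$ produces $\xs\in S_{\Xs}$ with $\|\eta_i^\ast+\xs\|\ge2-\eps$, and for each $i$ a vector $v_i\in S_X$ with $\eta_i^\ast(v_i)>1-\eps$ and $\xs(v_i)>1-\eps$; this secures the second scalar.

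With these choices the first scalar falls out automatically, since $\ws_i(T_iv_i)=\xi_i^\ast(v_i)=\|\xi_i^\ast\|\,\eta_i^\ast(v_i)>(1-\eps)^3$, so all three quantities in the displayed identity lie within $O(\eps)$ of $1$ and $\|T_i+T\|\ge2-O(\eps)$ for every $i$; running the argument with a suitable fixed fraction of $\eps$ at the outset then yields exactly $2-\eps$. The heart of the matter, and what I expect to be the main obstacle, is precisely this coordination: one cannot directly demand a single functional $\xs$ that is close to $1$ on all the norming vectors $u_i$ at once, so octahedrality of $\Xs$ must instead be fed the \emph{adjoint images} $\Ts_i\ws_i$ rather than the $u_i$, and octahedrality of $Y$ must be invoked \emph{first} in order to manufacture the very functionals $\ws_i$ on which those adjoint images depend. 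Getting this dependency order right is exactly what lets the two octahedrality hypotheses combine.
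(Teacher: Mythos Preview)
Your proof is correct. The paper does not prove this statement directly (it is quoted from \cite{becerra_guerrero_octahedral_2015}) but instead obtains it as the case $\delta=2$ of its Theorem~\ref{thm: L(X,Y) OH when Y has P2}(a), whose proof follows exactly your scheme: first use the hypothesis on $Y$ to produce $y\in S_Y$ together with functionals $y_i^\ast\in S_{Y^\ast}$ that are nearly norming for both $y$ and the $S_ix_i$, then feed the adjoint images $S_i^\ast y_i^\ast$ into the hypothesis on $X^\ast$ to obtain $x^\ast$, and take $T=x^\ast\otimes y$.
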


For the convenience of reference, let us point out a property for a Banach space $X$ used as a hypothesis in Theorem~\ref{thm: L(X,Y) OH when Y has P1}:
\begin{itemize}
\item[(P)] there is a $u\in S_X$ such that the set
\[
\bigl\{\xs\in B_{\Xs}\colon\,\xs(u)=1\bigr\}
\]
is norming for $X$ in the sense that, for every $x\in S_X$ and every $\eps>0$, there is an $\xs\in B_{\Xs}$ such that
\[
|\xs(x)|>1-\eps\quad\text{and}\quad\xs(u)=1.
\]
\end{itemize}

\begin{theorem}[{\cite[Theorems~3.1 and 3.2]{becerra_guerrero_octahedral_2015}}]\label{thm: L(X,Y) OH when Y has P1}
Let $X$ and $Y$ be Banach spaces, and let $H$ be a closed subspace of $\lxy$ containing the finite rank operators.
\begin{itemize}
\item[{\rm(a)}]
If $\Xast$ is octahedral and $Y$ has property \emph{(P)}, then $H$ is octahedral.
\item[{\rm(b)}]
If $\Xast$ has property \emph{(P)} and $Y$ is octahedral, then $H$ is octahedral.
\end{itemize}

\end{theorem}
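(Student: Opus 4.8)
The plan is to establish octahedrality of $H$ straight from the definition: given $T_1,\dots,T_n\in S_H$ and $\eps>0$, I will produce a single norm-one operator $S\in H$ with $\|T_i+S\|>2-\eps$ for every $i$. In both parts $S$ will be a rank-one operator $u^\ast\otimes w$ (defined by $x\mapsto u^\ast(x)\,w$), which lies in $H$ because $H$ contains the finite rank operators and $\|u^\ast\otimes w\|=\|u^\ast\|\,\|w\|$. The guiding principle is a duality between the two factors: property~(P) supplies a norming witness that is \emph{compatible with the distinguished element} (the vector $u$ in $Y$, resp.\ the functional $u^\ast$ on $\Xs$), and octahedrality of the remaining factor then glues these witnesses together over $i=1,\dots,n$ by a single alignment step.

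For part~(a), let $u\in S_Y$ be the vector from property~(P) of $Y$. For each $i$ choose $x_i\in S_X$ with $\|T_ix_i\|>1-\eps$; applying property~(P) to $T_ix_i/\|T_ix_i\|$ yields $y_i^\ast\in B_{\Ys}$ with $y_i^\ast(u)=1$ and $|y_i^\ast(T_ix_i)|$ close to $1$. Set $x_i^\ast:=T_i^\ast y_i^\ast$; then $\|x_i^\ast\|\geq|x_i^\ast(x_i)|=|y_i^\ast(T_ix_i)|$ is close to $1$. Now apply octahedrality of $\Xs$ to the functionals $x_i^\ast/\|x_i^\ast\|$ to get $u^\ast\in S_{\Xs}$ with $\|x_i^\ast+u^\ast\|$ close to $2$ for all $i$, and pick $w_i\in S_X$ nearly norming $x_i^\ast+u^\ast$, so that $x_i^\ast(w_i)$ and $u^\ast(w_i)$ are both close to $1$. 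Taking $S:=u^\ast\otimes u$ and testing $(T_i+S)w_i=T_iw_i+u^\ast(w_i)u$ against $y_i^\ast$ gives $y_i^\ast(T_iw_i)+u^\ast(w_i)=x_i^\ast(w_i)+u^\ast(w_i)$, which is close to $2$; the identity $y_i^\ast(u)=1$ is exactly what makes $u$ pull its full weight.

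Part~(b) is dual but has one genuine wrinkle: property~(P) of $\Xs$ lives in the bidual $\Xss$, whereas I must feed \emph{honest} domain vectors into $T_i$. So I first record the following consequence of property~(P) of $\Xs$, obtained from Goldstine's theorem: if $u^\ast\in S_{\Xs}$ is the distinguished functional, then for every $x^\ast\in S_{\Xs}$ and $\eps>0$ there is $x\in B_X$ with $u^\ast(x)>1-\eps$ and $|x^\ast(x)|>1-\eps$ (take $\Phi\in B_{\Xss}$ with $\Phi(u^\ast)=1$ and $|\Phi(x^\ast)|$ near $1$, then weak$^\ast$-approximate $\Phi$ by a point of $B_X$ simultaneously on $u^\ast$ and $x^\ast$). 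Granting this, for each $i$ pick $y_i^\ast\in S_{\Ys}$ with $\|T_i^\ast y_i^\ast\|>1-\eps$ and set $x_i^\ast:=T_i^\ast y_i^\ast$; the recorded consequence applied to $x_i^\ast/\|x_i^\ast\|$ produces $x_i\in B_X$ with $u^\ast(x_i)$ close to $1$ and $|x_i^\ast(x_i)|=|y_i^\ast(T_ix_i)|$ close to $1$, whence $\|T_ix_i\|$ is close to $1$ as well. Thus the $x_i$ simultaneously nearly norm $T_i$ \emph{and} $u^\ast$. Put $y_i:=T_ix_i/\|T_ix_i\|\in S_Y$, apply octahedrality of $Y$ to obtain $v\in S_Y$ with $\|y_i+v\|$ close to $2$ and functionals $w_i^\ast\in S_{\Ys}$ nearly norming $y_i+v$, and set $S:=u^\ast\otimes v$. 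Testing $(T_i+S)x_i=T_ix_i+u^\ast(x_i)v$ against $w_i^\ast$ gives $\|T_ix_i\|\,w_i^\ast(y_i)+u^\ast(x_i)\,w_i^\ast(v)$, each factor close to $1$, hence a value close to $2$.

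The main obstacle is the coordination in part~(b): a \emph{single} $v$ must serve all $i$ at once. Naively running octahedrality of $Y$ on the images $T_ix_i$ for arbitrarily chosen norming $x_i$ fails, because the functional octahedrality returns need not register $u^\ast$ with the right weight (equivalently, one cannot in general force a common $v$ to be nearly normed by a prescribed family of functionals). Using property~(P) of $\Xs$ in the form above is what lets me select the inputs $x_i$ with $u^\ast(x_i)\approx1$; only then does the single alignment vector $v$ interact correctly with $u^\ast\otimes v$ for every $i$ simultaneously. This Goldstine step bridging $\Xss$ and $X$ is the delicate point and secures the compatibility; the surrounding estimates are routine $\eps$-bookkeeping, and no sign adjustments are needed since all the relevant quantities are automatically nonnegative.
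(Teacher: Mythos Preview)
Your argument is correct. Note, however, that the paper does not prove this statement directly: Theorem~\ref{thm: L(X,Y) OH when Y has P1} is quoted from \cite{becerra_guerrero_octahedral_2015}, and the paper's own contribution is the more general Theorem~\ref{thm: L(X,Y) OH when Y has P2}, from which Theorem~\ref{thm: L(X,Y) OH when Y has P1} follows as the case $\delta=2$ once one observes that both octahedrality and property~(P) imply alternative octahedrality.

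That said, your direct proof and the paper's proof of Theorem~\ref{thm: L(X,Y) OH when Y has P2} share the same skeleton: build a rank-one operator $u^\ast\otimes u$, letting one factor supply a single common element that serves all $i$ and the other factor provide the simultaneous alignment. The difference is that the paper's alternative-octahedrality formulation treats (a) and (b) symmetrically, whereas you argue them separately and need the Goldstine step in~(b) to pull the $X^{\ast\ast}$-witnesses from property~(P) of $X^\ast$ down into $X$. In the paper's framework this step disappears: alternative octahedrality of $X^\ast$ reads $\max\{\|x_i^\ast+u^\ast\|,\|x_i^\ast-u^\ast\|\}\geq 2-\varepsilon$, and since the norm on $X^\ast$ is a supremum over $B_X$, the witnesses already live in $X$. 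Your route trades this unification for a more hands-on argument closer to the original proofs in \cite{becerra_guerrero_octahedral_2015}.
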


In Section~\ref{sec: necessary conditions}, we first establish a quantitative version of the following Theorem~\ref{thm: L(X,Y) OH implies Y is OH} from \cite{becerra_guerrero_octahedral_2015} in terms of average roughness (see Theorem~\ref{thm: L(X,Y) and Xs non-rough implies Y is rough}).

\begin{theorem}[{\cite[Proposition~3.9 and Corollary~3.10]{becerra_guerrero_octahedral_2015}}]\label{thm: L(X,Y) OH implies Y is OH}
Let $X$ and $Y$ be Banach spaces, and let $H$ be a closed subspace of $\lxy$ containing the finite rank operators.
\begin{itemize}
\item[{\rm(a)}]
If $H$ is octahedral and $\Xast$ is non-rough, then $Y$ is octahedral.
\item[{\rm(b)}]
If $H$ is octahedral and $Y$ is non-rough, then $\Xast$ is octahedral.
\end{itemize}

\end{theorem}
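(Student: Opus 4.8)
The plan is to prove both statements directly from the definition of octahedrality, using two standard inputs: the rank-one operators $\xs\otimes y$, defined by $(\xs\otimes y)(x)=\xs(x)y$, belong to $H$ and satisfy $\|\xs\otimes y\|=\|\xs\|\,\|y\|$; and the dual characterization of roughness recalled above, which turns non-roughness into the existence of small-diameter slices. Concretely, applying that characterization to the space $\Xs$ shows that \emph{$\Xs$ non-rough} yields, for every $\eta>0$, a functional $\xs\in S_{\Xs}$ and $\alpha>0$ for which the weak$^\ast$ slice $\{\xss\in B_{\Xss}\colon\xss(\xs)>1-\alpha\}$ has diameter $<\eta$; intersecting with $B_X\subseteq B_{\Xss}$, the slice $A:=\{x\in B_X\colon\xs(x)>1-\alpha\}$ of $B_X$ then has diameter $<\eta$. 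Symmetrically, \emph{$Y$ non-rough} yields directly a $y_0\in S_Y$ and $\alpha>0$ with $\diam\{\ys\in B_{\Ys}\colon\ys(y_0)>1-\alpha\}<\eta$.

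To prove (a), fix $x_0\in S_X\cap A$ and, to check octahedrality of $Y$, take $y_1,\dots,y_n\in S_Y$ and $\eps>0$. I would form the norm-one operators $T_i:=\xs\otimes y_i\in H$ and invoke octahedrality of $H$ to get $S\in S_H$ with $\|T_i+S\|\ge2-\eps'$ for all $i$, where $\eps'$ is a small parameter fixed at the end. For each $i$ choose $w_i\in S_X$ nearly norming $T_i+S$; from $\|\xs(w_i)y_i+S(w_i)\|\ge2-2\eps'$ and the fact that both summands have norm at most one, I get $|\xs(w_i)|\ge1-2\eps'$ and $\|S(w_i)\|\ge1-2\eps'$. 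Replacing $w_i$ by $-w_i$ if needed (this leaves the displayed norm unchanged) I may assume $\xs(w_i)>1-2\eps'$, so $w_i\in A$ once $2\eps'<\alpha$, whence $\|w_i-x_0\|<\eta$. Setting $y:=S(x_0)$ and using that $S$ is a contraction gives $\|y_i+y\|\ge2-4\eps'-\eta$ and $\|y\|\ge1-2\eps'-\eta$; after normalizing $y$ and taking $\eps',\eta$ small relative to $\eps$, the vector $y/\|y\|$ witnesses octahedrality of $Y$.

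Part (b) is the symmetric argument on the dual side. Given $x_1^\ast,\dots,x_n^\ast\in S_{\Xs}$ and $\eps>0$, I would set $T_i:=x_i^\ast\otimes y_0\in S_H$ and again obtain $S\in S_H$ with $\|T_i+S\|\ge2-\eps'$. The computation replacing the one above is the identity $\|T_i+S\|=\sup_{\ys\in B_{\Ys}}\|\ys(y_0)x_i^\ast+\Sast(\ys)\|$, obtained by writing the operator norm as a supremum over $B_X\times B_{\Ys}$ and passing to the adjoint $\Sast\colon\Ys\to\Xs$. From it I extract $y_i^\ast\in B_{\Ys}$ with $\|y_i^\ast(y_0)x_i^\ast+\Sast(y_i^\ast)\|\ge2-2\eps'$, deduce $|y_i^\ast(y_0)|\ge1-2\eps'$, adjust the sign so that $y_i^\ast$ lies in the small slice of $B_{\Ys}$, and put $\xs:=\Sast(y_0^\ast)$ for a fixed $y_0^\ast$ in that slice. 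The same estimates then yield $\|x_i^\ast+\xs\|\ge2-\eps$ after normalization, so $\Xs$ is octahedral.

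The crux — and the only place non-roughness enters — is the simultaneous alignment in the final step. The elements $w_i$ (resp.\ $y_i^\ast$) that nearly norm the perturbed operators depend on $i$, so $S(x_0)$ (resp.\ $\Sast(y_0^\ast)$) is not obviously a common good direction. Non-roughness forces all these norming elements into a single weak$^\ast$ slice of diameter $<\eta$, after which the contraction $S$ (resp.\ $\Sast$) maps them to within $\eta$ of one vector, making that vector work for every index at once. Everything else is routine: the sign adjustments needed to land in the positive part of the slice, and the bookkeeping of $\eps'$, $\eta$, and $\alpha$ to pass from bounds of the form $2-4\eps'-\eta$ to $2-\eps$.
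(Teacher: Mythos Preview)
Your argument is correct and follows essentially the same route as the paper. The paper obtains Theorem~\ref{thm: L(X,Y) OH implies Y is OH} as the case $\delta=2$ of Theorem~\ref{thm: L(X,Y) and Xs non-rough implies Y is rough}, whose proof uses the same ingredients you do: the rank-one operators $\xs\otimes y_i$, the small-diameter slice coming from non-roughness of $\Xs$, the observation that the elements of $B_X$ (resp.\ $B_{\Ys}$) nearly norming the perturbed operators are forced into that slice, and the use of a single value $Tx$ (resp.\ $S^\ast y^\ast$) as the common witness; the only cosmetic difference is that the paper phrases things via the $\delta$-average roughness inequality with a small-norm perturbation $T$, while you work directly with the octahedrality definition and a unit-norm $S$.
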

\noindent We then introduce a new notion of \emph{weak $\delta$-average roughness} of a Banach space which corresponds to the property that the diameter of every non-empty relatively weak$^\ast$ open subset of the dual unit ball is greater than or equal to $\delta$ (see Theorem~\ref{thm: omnibus thm for the weak* d-delta-P}). Our main result in Section~\ref{sec: necessary conditions} is a quantitative  version of Theorem~\ref{thm: L(X,Y) OH implies Y is OH} in terms of weak $\delta$-average roughness (see Theorem~\ref{thm: L(X,Y) and Xs non-rough implies Y is weakly rough}).  

Let us fix some more notation. Let $X$ and $Y$ be Banach spaces. For $x^\ast\in X^\ast$ and $y\in Y$, we denote by $x^\ast\otimes y$ the operator in $\mathcal L(X,Y)$ defined by $(x^\ast\otimes y)(x)=x^\ast(x)y$, $x\in X$. 
For a subset $A$ of $X$, its linear span and convex hull are denoted by $\operatorname{span}(A)$ and $\operatorname{conv}(A)$, respectively.

\section{Sufficient conditions for roughness in spaces of operators}\label{sec: sufficient conditions}

The main objective in this section is to relax the assumptions in Theorems~\ref{thm: L(X,Y) OH when Y OH} and \ref{thm: L(X,Y) OH when Y has P1}. In order to do so, we introduce a new notion of \emph{alternative octahedrality}, which in general is a weaker property than both octahedrality and property (P). 

\begin{definition}
Let $X$ be a Banach space. We say that $X$ is \emph{alternatively octahedral} if, whenever $n\in\N$, $x_1,\dots,x_n\in S_X$, and $\eps>0$, there is a $y\in S_X$ such that
\[
\max\{\|x_i+y\|,\|x_i-y\| \}\geq 2-\eps\quad\text{ for all $i\in\{1,\dots,n\}$}.
\]
\end{definition}

Note that the alternative octahedrality of $X$ is equivalent to the following condition:
\begin{itemize}
\item
whenever $n\in \N$, $x_1,\dotsc,x_n\in S_{X}$, and $\eps>0$, there are $y\in S_X$ and $\xs_1,\dotsc,\xs_n\in S_{\Xs}$ such that, for every $i\in\{1,\dotsc,n\}$,
\[
|\xs_i(x_i)|>1-\eps
\quad\text{and}\quad
|\xs_i(y)|>1-\eps.
\]
\end{itemize}
Observe that both octahedrality and the property (P) above imply alternative octahedrality. On the other hand, for example, $c_0$ is alternatively octahedral, but fails to be octahedral nor does it have property (P).

\begin{theorem}\label{thm: L(X,Y) OH when Y has P2}
Let $X$ and $Y$ be Banach spaces, let $H$ be a closed subspace of $\lxy$ containing the finite rank operators, and let $\delta>0$.
\begin{itemize}
\item[{\rm(a)}]
If  $\Xs$ is $\delta$-average rough and $Y$ is alternatively octahedral, then  $H$ is $\delta$-average rough.
\item[{\rm(b)}]
If $\Xs$ is alternatively octahedral and $Y$ is $\delta$-average rough, then $H$ is $\delta$-average rough.

\end{itemize}
\end{theorem}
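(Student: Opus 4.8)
The plan is to verify that $H$ is $\delta$-average rough through the dual characterization quoted in the introduction: it suffices to bound below the diameter of every convex combination of weak$^\ast$ slices of $B_{H^\ast}$, and by a routine repetition argument we may restrict to uniform combinations. Thus I fix $T_1,\dots,T_n\in S_H$, numbers $\alpha_1,\dots,\alpha_n>0$, and $\eps>0$, and set out to produce functionals $\Lambda_i,\Lambda_i'$ in the slice $S(T_i,\alpha_i):=\{\Lambda\in B_{H^\ast}\colon\Lambda(T_i)>1-\alpha_i\}$ with $\n{\frac1n\sum_i(\Lambda_i-\Lambda_i')}\ge\delta-\eps$. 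The functionals I use are the elementary ones $\omega_{x,\ys}\colon T\mapsto\ys(Tx)$ for $x\in B_X$, $\ys\in B_{Y^\ast}$, which lie in $B_{H^\ast}$; the key point is that they evaluate exactly on a rank-one operator, $\omega_{x,\ys}(\zs\otimes y)=\zs(x)\,\ys(y)$, and $\zs\otimes y\in H$ since $H$ contains the finite rank operators.

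For part (a) I would argue as follows. For each $i$ pick $x_i\in S_X$ with $\n{T_ix_i}>1-\eps_1$ (where $\eps_1$ is small, chosen after the $\alpha_i$), put $u_i=T_ix_i/\n{T_ix_i}$, and apply alternative octahedrality of $Y$ to $u_1,\dots,u_n$ to obtain $y\in S_Y$ and $\ys_i\in S_{Y^\ast}$ with $\abs{\ys_i(u_i)}>1-\eps_1$ and $\abs{\ys_i(y)}>1-\eps_1$; after a sign change assume $\ys_i(T_ix_i)>0$ and let $\sigma_i$ be the sign of $\ys_i(y)$. The vectors $c_i^\ast:=T_i^\ast\ys_i$ then satisfy $\n{c_i^\ast}>1-2\eps_1$; normalising to $\tilde c_i^\ast\in S_{\Xs}$ and applying $\delta$-average roughness of $\Xs$ yields, for arbitrarily small $\n{\zs}$, the bound $\frac1n\sum_i(\n{\tilde c_i^\ast+\zs}+\n{\tilde c_i^\ast-\zs}-2)\ge(\delta-\eps_1)\n{\zs}$. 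Choosing $a_i^{\pm}\in B_X$ that nearly norm $\tilde c_i^\ast\pm\zs$ gives, since $\tilde c_i^\ast(a_i^\pm)\le1$, that $\zs(a_i^+)-\zs(a_i^-)\ge\n{\tilde c_i^\ast+\zs}+\n{\tilde c_i^\ast-\zs}-2-2\eps_1\n{\zs}$. I then set $S=\zs\otimes y$ and take $\Lambda_i=\omega_{a_i^{\sigma_i},\ys_i}$ and $\Lambda_i'=\omega_{a_i^{-\sigma_i},\ys_i}$, where $a_i^{\sigma_i}$ is $a_i^+$ or $a_i^-$ according to the sign $\sigma_i$. Membership $\Lambda_i,\Lambda_i'\in S(T_i,\alpha_i)$ holds once $\eps_1$ and $\n{\zs}$ are small, because $\ys_i(T_ia_i^\pm)=\n{c_i^\ast}\,\tilde c_i^\ast(a_i^\pm)$ is close to $1$; and evaluating on $S$ gives $\frac1n\sum_i(\Lambda_i-\Lambda_i')(S)\ge(1-\eps_1)(\delta-\eps_1)\n{\zs}-2\eps_1\n{\zs}$. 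Dividing by $\n{S}=\n{\zs}$ and letting $\eps_1\to0$ delivers the claim.

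Part (b) is the mirror image, with the perturbation now living in $Y$. I would fix $x_i,\ys_i$ approximately norming $T_i$, normalise $T_i^\ast\ys_i$ to $\xs_i\in S_{\Xs}$, and apply alternative octahedrality of $\Xs$ to $\xs_1,\dots,\xs_n$; this produces $\zs\in S_{\Xs}$ together with functionals in $X^{\ast\ast}$, which by Goldstine's theorem I realise as points $\tilde x_i\in B_X$ satisfying simultaneously $\abs{\xs_i(\tilde x_i)}>1-\eps_1$ and $\abs{\zs(\tilde x_i)}>1-\eps_1$. Setting $d_i=T_i\tilde x_i/\n{T_i\tilde x_i}\in S_Y$ and applying $\delta$-average roughness of $Y$ to $d_1,\dots,d_n$, I obtain a small $w\in Y$ and functionals $e_i^{\pm}\in B_{Y^\ast}$ nearly norming $d_i\pm w$; the test operator is $S=\zs\otimes w$ and the functionals are $\omega_{\tilde x_i,e_i^{\pm}}$, paired according to the sign of $\zs(\tilde x_i)$. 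The computation is otherwise identical to part (a).

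The step I expect to be delicate is the control of error terms: the naive estimate $\n{T_i\pm S}\ge\omega_{x_i,\ys_i}(T_i\pm S)$ carries the fixed defect $1-\ys_i(T_ix_i)$, which does not scale with $\n{S}$ and therefore ruins the quotient as $\n{S}\to0$. Routing the argument through the slice characterisation and evaluating the differences $\Lambda_i-\Lambda_i'$ directly on the rank-one operator $S$ removes this difficulty, since that evaluation is exact and linear and inherits the correct scaling from the roughness quantity of $\Xs$ (respectively $Y$). The second point worth isolating is precisely why \emph{alternative} octahedrality suffices where \cite{becerra_guerrero_octahedral_2015} required octahedrality: the signs $\sigma_i$ (respectively the signs of $\zs(\tilde x_i)$) are harmless because roughness is symmetric in $\pm\zs$, so they are absorbed simply by swapping the roles of $\Lambda_i$ and $\Lambda_i'$.
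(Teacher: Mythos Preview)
Your argument is correct, and the core construction coincides with the paper's: in both proofs one first uses alternative octahedrality of $Y$ to obtain $y\in S_Y$ and $y_i^\ast\in S_{Y^\ast}$ with $|y_i^\ast(S_ix_i)|$ and $|y_i^\ast(y)|$ close to $1$, then applies the $\delta$-average roughness of $X^\ast$ at the points $S_i^\ast y_i^\ast$ to produce a small perturbation $x^\ast$, and finally tests with the rank-one operator $x^\ast\otimes y$. The difference is purely in packaging. You route the estimate through the slice characterisation, choosing explicit norming elements $a_i^{\pm}\in B_X$ and verifying that the evaluation functionals $\omega_{a_i^{\pm},y_i^\ast}$ lie in the prescribed slices; the paper instead works directly with the primal definition and bounds $\|S_i\pm T\|$ from below by $\|S_i^\ast y_i^\ast\pm T^\ast y_i^\ast\|$.

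The point worth noting is that the ``fixed defect'' you worry about in your final paragraph is in fact not an obstacle for the direct approach. Since $T=x^\ast\otimes y$ gives $T^\ast y_i^\ast=y_i^\ast(y)\,x^\ast$, one has the exact identity
\[
\|x^\ast-T^\ast y_i^\ast\|=\bigl(1-y_i^\ast(y)\bigr)\|x^\ast\|,
\]
which already scales with $\|T\|$; so after replacing $y_i^\ast$ by $-y_i^\ast$ if necessary to make $y_i^\ast(y)>1-\eps$ (this costs nothing, as $|y_i^\ast(S_ix_i)|$ is unaffected), the paper obtains $\|S_i\pm T\|\geq\|S_i^\ast y_i^\ast\pm x^\ast\|-2\eps\|x^\ast\|$ in two lines. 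This is precisely where alternative octahedrality (rather than full octahedrality) suffices, and it makes the slice detour unnecessary. Your approach buys nothing extra here and is a little longer, since you must also check slice membership and handle the signs $\sigma_i$; but it is a perfectly valid alternative, and the Goldstine step in part~(b) is handled correctly.
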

\noindent Theorem~\ref{thm: L(X,Y) OH when Y has P2} improves Theorems~\ref{thm: L(X,Y) OH when Y OH} and \ref{thm: L(X,Y) OH when Y has P1}, because octahedrality is $2$-average roughness. In particular, it shows that $\calL(c_0,c_0)$ is octahedral, while its octahedrality can not be deduced from Theorems~\ref{thm: L(X,Y) OH when Y OH} or \ref{thm: L(X,Y) OH when Y has P1}. Also, Theorem~\ref{thm: L(X,Y) OH when Y has P2} allows one to refine \cite[Corollaries~3.3, 3.4, and 3.6]{becerra_guerrero_octahedral_2015}.

\begin{proof}[Proof of Theorem~\ref{thm: L(X,Y) OH when Y has P2}]
(a).
Let $n\in\mathbb N$, $S_1,\dotsc,S_n\in S_H$, and $\varepsilon>0$.
It suffices to find a $T\in H$ with $\|T\|=\varepsilon$ satisfying
\[
\frac{1}{n}\sum_{i=1}^n \bigl(\|S_i+T\|+\|S_i-T\|\bigr)>(\delta-5\varepsilon)\|T\|+2.
\]
Choose $x_i\in S_X$ so that $\|S_ix_i\|>1-\varepsilon^2$.
Since $Y$ is alternatively octahedral, there are $y\in S_Y$ and $y^\ast_1,\dotsc,y^\ast_n\in S_{Y^\ast}$ such that, for every $i\in\{1,\dotsc,n\}$,
\[
|y^\ast_i(S_ix_i)|>1-\varepsilon^2
\quad\text{and}\quad
y^\ast_i(y)>1-\varepsilon.
\]
Since $X^\ast$ is $\delta$-average rough, there is an $x^\ast\in X^\ast$ with $\|x^\ast\|=\varepsilon$ such that
\[
\frac{1}{n}\sum_{i=1}^n \bigl(\|S_i^\ast y_i^\ast+x^\ast\|+\|S_i^\ast y_i^\ast-x^\ast\|\bigr)>(\delta-\varepsilon)\|x^\ast\|+\frac{2}{n}\sum_{i=1}^n\|S_i^\ast y_i^\ast\|.
\]
Thus
\[
\frac{1}{n}\sum_{i=1}^n \bigl(\|S_i^\ast y_i^\ast+x^\ast\|+\|S_i^\ast y_i^\ast-x^\ast\|\bigr)>(\delta-3\varepsilon)\|x^\ast\|+2.
\]
Letting $T:=x^\ast\otimes y$, one has $\|T\|=\|x^\ast\|=\varepsilon$ and
\begin{multline*}
\frac{1}{n}\sum_{i=1}^n \bigl(\|S_i+T\|+\|S_i-T\|\bigr)\\
\begin{aligned}
&\geq \frac{1}{n}\sum_{i=1}^n \bigl(\|S_i^\ast y_i^\ast+T^\ast y_i^\ast\|+\|S_i^\ast y_i^\ast-T^\ast y_i^\ast\|\bigr)\\
&\geq \frac{1}{n}\sum_{i=1}^n \bigl(\|S_i^\ast y_i^\ast+x^\ast\|+\|S_i^\ast y_i^\ast-x^\ast\|-2\|x^\ast-T^\ast y_i^\ast\|\bigr)\\
&= \frac{1}{n}\sum_{i=1}^n \bigl(\|S_i^\ast y_i^\ast+x^\ast\|+\|S_i^\ast y_i^\ast-x^\ast\|-2\bigl(1-y_i^\ast(y)\bigr)\|x^\ast\|\bigr)\\
&>(\delta-3\varepsilon)\|x^\ast\|+2-2\varepsilon\|x^\ast\|\\
&=(\delta-5\varepsilon)\|T\|+2.
\end{aligned}
\end{multline*}

(b). The proof is similar to that of (a).
\end{proof}

\begin{theorem}\label{thm: L(X,Y) LOH when Y has P2}
Let $X$ and $Y$ be Banach spaces, let $H$ be a closed subspace of $\lxy$ containing the finite rank operators, and let $\delta>0$.
\begin{itemize}
\item[{\rm(a)}]
If $\Xs$ is $\delta$-rough, then $H$ is $\delta$-rough.
\item[{\rm(b)}]
If $Y$ is $\delta$-rough, then $H$ is $\delta$-rough.
\end{itemize}
\end{theorem}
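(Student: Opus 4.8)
The plan is to follow the scheme of the proof of Theorem~\ref{thm: L(X,Y) OH when Y has P2}, but to exploit that roughness, unlike average roughness, concerns a \emph{single} operator $S\in S_H$ at a time. This is what lets the octahedrality (respectively, alternative octahedrality) hypotheses be dropped: with only one operator in play I shall need only one auxiliary norming functional and one norming vector, each supplied by Hahn--Banach, whereas in Theorem~\ref{thm: L(X,Y) OH when Y has P2} octahedrality was exactly the device aligning \emph{several} directions simultaneously. As there, it suffices to produce, for every $\eps>0$, a $T\in H$ with $\|T\|=\eps$ and
\[
\|S+T\|+\|S-T\|>(\delta-5\eps)\|T\|+2;
\]
letting $\eps\to0$ then gives $\limsup_{\|T\|\to0}(\|S+T\|+\|S-T\|-2)/\|T\|\ge\delta$, i.e.\ $\delta$-roughness of $H$. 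I would also first record the reformulation that makes prescribed perturbation norms available: for fixed $z$ in the unit sphere of a Banach space $Z$ the map $w\mapsto\|z+w\|+\|z-w\|-2$ is convex, non-negative, and vanishes at $0$, so its quotient by $\|w\|$ is non-decreasing along each ray; hence $Z$ is $\delta$-rough iff for every such $z$, every $r>0$, and every $\eps>0$ there is $w$ with $\|w\|=r$ and $\|z+w\|+\|z-w\|-2>(\delta-\eps)\|w\|$. In short, $\delta$-roughness delivers perturbations of any prescribed small norm, which is the form in which I use the hypotheses.

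For part~(a), fix $S\in S_H$ and $\eps>0$, choose $x\in S_X$ with $\|Sx\|>1-\eps^2$, then $y^\ast\in S_{\Ys}$ with $y^\ast(Sx)>1-\eps^2$, and finally $y\in S_Y$ with $y^\ast(y)>1-\eps$. Put $a:=\|S^\ast y^\ast\|$, so $1-\eps^2<a\le1$, and $z^\ast:=S^\ast y^\ast/a\in S_{\Xs}$. Applying $\delta$-roughness of $\Xs$ at $z^\ast$ in the prescribed-norm form, I obtain $x^\ast\in\Xs$ with $\|x^\ast\|=\eps$ and $\|S^\ast y^\ast+x^\ast\|+\|S^\ast y^\ast-x^\ast\|>(\delta-\eps)\|x^\ast\|+2a$; since $2a>2-2\eps^2=2-2\eps\|x^\ast\|$, the right-hand side exceeds $(\delta-3\eps)\|x^\ast\|+2$. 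Now set $T:=x^\ast\otimes y\in H$, so $\|T\|=\|x^\ast\|=\eps$ and $T^\ast y^\ast=y^\ast(y)x^\ast$. Estimating $\|S+T\|+\|S-T\|\ge\|S^\ast y^\ast+y^\ast(y)x^\ast\|+\|S^\ast y^\ast-y^\ast(y)x^\ast\|$ and replacing $y^\ast(y)x^\ast$ by $x^\ast$ at the cost of $2(1-y^\ast(y))\|x^\ast\|<2\eps\|x^\ast\|$ yields the target inequality with constant $\delta-5\eps$.

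For part~(b) the roles of $X$ and $Y$ are interchanged. Fix $S\in S_H$ and $\eps>0$, choose $x\in S_X$ with $\|Sx\|>1-\eps^2$, put $b:=\|Sx\|$ and $v:=Sx/b\in S_Y$, and pick $x^\ast\in S_{\Xs}$ with $x^\ast(x)>1-\eps$. Applying $\delta$-roughness of $Y$ at $v$ in the prescribed-norm form yields $u\in Y$ with $\|u\|=\eps$ and $\|Sx+u\|+\|Sx-u\|>(\delta-\eps)\|u\|+2b>(\delta-3\eps)\|u\|+2$. Setting $T:=x^\ast\otimes u\in H$ gives $\|T\|=\|u\|=\eps$ and $Tx=x^\ast(x)u$; estimating $\|S+T\|+\|S-T\|\ge\|Sx+x^\ast(x)u\|+\|Sx-x^\ast(x)u\|$ and replacing $x^\ast(x)u$ by $u$ at cost $<2\eps\|u\|$ finishes exactly as before.

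The only point needing care---and the sole place where the hypotheses enter quantitatively---is that the vectors to which roughness is applied, namely $S^\ast y^\ast$ in (a) and $Sx$ in (b), lie \emph{near} the unit sphere but not on it, producing a defect $2(1-a)$ (respectively $2(1-b)$). I expect this to be the main thing to get right: it is absorbed into the $(\delta-C\eps)\|T\|$ term precisely because the perturbation is taken of the prescribed norm $\eps$ while the norming tolerance is $\eps^2$, so the defect is $O(\eps\|T\|)$ rather than a fixed constant. Everything else is routine, and in particular no octahedrality of $Y$ (respectively $\Xs$) is invoked anywhere.
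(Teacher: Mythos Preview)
Your proof is correct and follows essentially the same approach as the paper's: pick $y^\ast\in S_{Y^\ast}$ with $\|S^\ast y^\ast\|>1-\eps^2$ and $y\in S_Y$ with $y^\ast(y)>1-\eps$, apply $\delta$-roughness of $X^\ast$ at $S^\ast y^\ast$ to obtain a perturbation $x^\ast$ of norm~$\eps$, set $T:=x^\ast\otimes y$, and estimate exactly as you do (the paper states part~(b) as ``similar''). Your explicit normalisation $z^\ast=S^\ast y^\ast/\|S^\ast y^\ast\|$ and the preliminary convexity remark justifying the prescribed-norm form of roughness make precise steps that the paper takes for granted, but there is no substantive difference in strategy.
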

\begin{proof}
\medskip
(a). We mimic the proof of Theorem~\ref{thm: L(X,Y) OH when Y has P2}(a) with $n=1$.
Let $S\in S_H$ and $\varepsilon>0$.
It suffices to find a $T\in H$ with $\|T\|=\varepsilon$ satisfying
\[
\|S+T\|+\|S-T\|>(\delta-5\varepsilon)\|T\|+2.
\]
Let $y^\ast\in S_{Y^\ast}$ be such that $\|\Sast\yast\|>1-\eps^2$. Let $y\in S_Y$ be such that $\yast(y)>1-\eps$.
Since $X^\ast$ is $\delta$-rough, there is an $x^\ast\in X^\ast$ with $\|x^\ast\|=\varepsilon$ such that
\[
\|S^\ast y^\ast+x^\ast\|+\|S^\ast y^\ast-x^\ast\|>(\delta-\varepsilon)\|x^\ast\|+2\|S^\ast y^\ast\|.
\]
Thus
\[
\|S^\ast y^\ast+x^\ast\|+\|S^\ast y^\ast-x^\ast\|>(\delta-3\varepsilon)\|x^\ast\|+2.
\]
Letting $T:=x^\ast\otimes y$, one has $\|T\|=\|x^\ast\|=\varepsilon$ and
\begin{multline*}
\quad\|S+T\|+\|S-T\| \\
\begin{aligned}
&\geq \|S^\ast y^\ast+T^\ast y^\ast\|+\|S^\ast y^\ast-T^\ast y^\ast\|\\
&\geq \|S^\ast y^\ast+x^\ast\|+\|S^\ast y^\ast-x^\ast\|-2\|x^\ast-T^\ast y^\ast\|\quad\\
&= \|S^\ast y^\ast+x^\ast\|+\|S^\ast y^\ast-x^\ast\|-2\bigl(1-y^\ast(y)\bigr)\|x^\ast\|\quad\\
&>(\delta-3\varepsilon)\|x^\ast\|+2-2\varepsilon\|x^\ast\|\\
&=(\delta-5\varepsilon)\|T\|+2.
\end{aligned}
\end{multline*}

(b). The proof is similar to that of (a).
\end{proof}

We do not know whether for the octahedrality of $\calL(X,Y)$ it is, in general, sufficient that only one of the spaces $\Xast$ or $Y$ is octahedral without any additional assumptions (see also the discussion after Corollary~3.6 in \cite{becerra_guerrero_octahedral_2015}). We next show that, for $1<p<\infty$, the space $\calL(c_0, \ell_p^2)$ is octahedral. Its octahedrality can not be deduced from Theorem~\ref{thm: L(X,Y) OH when Y has P2}, because $\ell^2_p$ is not alternatively octahedral. The octahedrality of $\calL(c_0, \ell_1^n)$ and $\calL(c_0, \ell_\infty^n)$ is a direct consequence of Theorem~\ref{thm: L(X,Y) OH when Y has P2}, because $\ell^n_1$ and $\ell^n_\infty$ are both alternatively octahedral.

\begin{proposition}\label{prop: L(c0,l22) is OH}
If $1< p<\infty$, then $\calL(c_0, \ell_p^2)$ is octahedral.
\end{proposition}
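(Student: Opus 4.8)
The plan is to exploit the abundance of coordinates in $c_0$ to compensate for the fact that $\ell_p^2$ is \emph{not} alternatively octahedral, so that Theorem~\ref{thm: L(X,Y) OH when Y has P2} does not apply. Write $H=\calL(c_0,\ell_p^2)$ and identify each operator $S\in H$ with the sequence $(Se_k)_k$ in $\ell_p^2$, where $(e_k)$ is the unit vector basis of $c_0$. Since the two coordinate functionals of $\ell_p^2$ compose with $S$ to give elements of $c_0^\ast=\ell_1$, one has $\sum_k\|Se_k\|_p<\infty$, and in particular the tails $\sum_{k>M}\|Se_k\|_p$ are small for large $M$. To prove octahedrality, fix $T_1,\dots,T_n\in S_H$ and $\eps>0$; I must produce $T\in S_H$ with $\|T_i+T\|\ge2-C\eps$ for all $i$. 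First I would pick finitely supported $x^{(i)}\in B_{c_0}$ with $\|T_ix^{(i)}\|_p>1-\eps$ and set $z_i:=T_ix^{(i)}\in\ell_p^2$.

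The main idea is to build $T$ from \emph{several} far-out coordinates rather than one. Let $m_1<\dots<m_N$ be larger than all the supports of the $x^{(i)}$ and large enough that $\sum_{j=1}^N\|T_ie_{m_j}\|_p<\eps$ for every $i$ (possible by absolute summability of $(T_ie_k)_k$), and put $T:=\sum_{j=1}^N e_{m_j}^\ast\otimes y_j$ for vectors $y_1,\dots,y_N\in\ell_p^2$ to be chosen, where $e_{m_j}^\ast\in\ell_1$ are the coordinate functionals of $c_0$. Because distinct coordinates of $x\in B_{c_0}$ vary independently over $[-1,1]$, the norm of $T$ is
\[
\|T\|=\sup_{t\in[-1,1]^N}\Bigl\|\sum_{j=1}^N t_jy_j\Bigr\|_p=\max_{w\in P}\|w\|_p,
\]
where $P=\sum_{j=1}^N[-y_j,y_j]$ is the zonogon that is the image of the cube. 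Here is the crucial planar input: every centrally symmetric convex polygon in the plane is a zonogon (a Minkowski sum of segments). So I would fix a centrally symmetric convex polygon $P$ inscribed in $B_{\ell_p^2}$ with all vertices on $S_{\ell_p^2}$, fine enough that $(1-\eps)B_{\ell_p^2}\subseteq P$, and take the $y_j$ to be its edge vectors. Then $P\subseteq B_{\ell_p^2}$ forces $\|w\|_p\le1$ on $P$, while the vertices have norm $1$, so $\|T\|=1$ and $T\in S_H$.

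To finish, fix $i$ and let $\hat z_i:=z_i/\|z_i\|_p\in S_{\ell_p^2}$. Since $(1-\eps)\hat z_i\in(1-\eps)B_{\ell_p^2}\subseteq P$, write $(1-\eps)\hat z_i=\sum_j t_j^{(i)}y_j$ with $t^{(i)}\in[-1,1]^N$, and set $x:=x^{(i)}+\sum_j t_j^{(i)}e_{m_j}\in B_{c_0}$ (the supports are disjoint and all entries lie in $[-1,1]$). Then
\[
(T_i+T)x=z_i+T_i\Bigl(\sum_j t_j^{(i)}e_{m_j}\Bigr)+(1-\eps)\hat z_i,
\]
and since the middle term has norm $\le\sum_j\|T_ie_{m_j}\|_p<\eps$ while $z_i$ and $(1-\eps)\hat z_i$ point in the same direction, I obtain
\[
\|T_i+T\|\ge\|(T_i+T)x\|_p\ge\bigl(\|z_i\|_p+(1-\eps)\bigr)-\eps\ge2-3\eps.
\]
As $\eps$ was arbitrary and $T\in S_H$, this gives octahedrality of $H$.

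The hard part, and the reason the statement is restricted to the two-dimensional range $\ell_p^2$, is the geometric step in the second paragraph: one needs a zonotope inside $B_{\ell_p^2}$ whose vertices lie on the sphere yet which fills out almost all of the ball, so that $T$ can simultaneously have norm exactly $1$ and reach every direction $\hat z_i$ at nearly full length. In the plane this is immediate from the polygon/zonogon identification; I expect the balance between keeping $\|T\|=1$ and arranging $P\supseteq(1-\eps)B_{\ell_p^2}$ to be the only delicate point, the remaining estimates being routine.
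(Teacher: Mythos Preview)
Your proof is correct and shares the paper's overall skeleton: define $T$ on finitely many far-out coordinates of $c_0$ so that its range, as a function of the coordinate values, sweeps out a zonogon in $\ell_p^2$ contained in the unit ball but reaching (close to) each target direction $\hat z_i$; then test $S_i+T$ on $x^{(i)}$ plus a suitable tail. The genuine difference is in the zonogon you use. The paper builds a \emph{tailored} zonogon through the specific points $a_i=S_ix_i/\|S_ix_i\|$: setting $Te_{N+1}=\tfrac{a_1+a_n}{2}$ and $Te_{N+j}=\tfrac{a_j-a_{j-1}}{2}$ for $j\ge2$, it reaches each $a_i$ exactly by a sign choice, but then needs a separate Lemma~\ref{lem: pm circle} (proved via an ad hoc geometric fact about the sets $\Theta_{a,b}=B_{\ell_p^2}\cap(B_{\ell_p^2}+a+b)$) to verify that this zonogon stays inside $B_{\ell_p^2}$. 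You instead take a \emph{universal} zonogon: a fine centrally symmetric inscribed polygon $P$ with $(1-\eps)B_{\ell_p^2}\subseteq P\subseteq B_{\ell_p^2}$, invoking only the elementary fact that every centrally symmetric planar polygon is a Minkowski sum of segments. Your route is cleaner and avoids Lemma~\ref{lem: pm circle} altogether; the price is that you use $N=N(\eps)$ extra coordinates rather than exactly $n$, and you reach $(1-\eps)\hat z_i$ rather than $\hat z_i$ exactly---both harmless for the conclusion. Either argument makes transparent why the two-dimensionality of the range is essential: it is precisely the planar zonogon phenomenon that fails in higher dimensions.
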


Our proof of Proposition~\ref{prop: L(c0,l22) is OH} uses the following lemma.


\begin{lemma}\label{lem: pm circle}
Let $1<p<\infty$, let $n\in\N$, and let $a_1=(\alpha_1,\beta_1),\dotsc,a_n=(\alpha_n,\beta_n)\in S_{\ell^2_p}$ be such that $\alpha_1,\dotsc,\alpha_n\geq 0$ and $\beta_1\geq\dotsb\geq\beta_n$. 
Then
\[
\theta_1\cdot\frac{a_1+a_n}{2}+\theta_2\cdot\frac{a_2-a_1}{2}+\dotsb+\theta_n\cdot\frac{a_n-a_{n-1}}{2}\in B_{\ell^2_p}
\]
for all $\theta_1,\dotsc,\theta_n\in\{-1,1\}$.
\end{lemma}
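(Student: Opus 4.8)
My plan is to dualise the statement and turn it into a one–variable inequality governed by the concave profile of the unit sphere, the hypothesis $a_i\in S_{\ell^2_p}$ entering only through the vanishing of that profile at the poles.

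First I would reformulate the conclusion geometrically. Setting $v_1=\tfrac12(a_1+a_n)$ and $v_i=\tfrac12(a_i-a_{i-1})$ for $2\le i\le n$, the $2^n$ points in the statement are exactly the points $\sum_{i=1}^n\theta_iv_i$, whose convex hull is the zonotope $Z=\sum_{i=1}^n[-v_i,v_i]$. Since $B_{\ell^2_p}$ is convex, the assertion that all these points lie in $B_{\ell^2_p}$ is equivalent to $Z\subseteq B_{\ell^2_p}$. By polar duality this is in turn equivalent to $h_Z(u)\le\|u\|_q$ for every $u\in\R^2$, where $1/p+1/q=1$ and $h_Z$ is the support function of $Z$. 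Because the support function of a Minkowski sum of segments is the sum of the individual support functions, this reads
\[
\sum_{i=1}^n\bigl|\langle u,v_i\rangle\bigr|\le\|u\|_q\qquad(u\in\R^2).
\]

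Next I would make the left–hand side telescope. Writing $u=(\xi,\eta)$ and $f_i=\langle u,a_i\rangle$, the sum above equals
\[
\frac{|f_1+f_n|}{2}+\frac12\sum_{i=2}^n|f_i-f_{i-1}| .
\]
Parametrising the right half of $S_{\ell^2_p}$ by $a=(\phi(\beta),\beta)$ with $\phi(\beta)=(1-|\beta|^p)^{1/p}$, one has $f_i=F(\beta_i)$ for $F(\beta):=\xi\phi(\beta)+\eta\beta$, and $\phi$ is concave on $[-1,1]$ with $\phi(\pm1)=0$. Replacing $u$ by $-u$ if necessary (which alters neither side) I may assume $\xi\ge0$, so that $F$ is concave on $[-1,1]$. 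I would then bound the discrete variation $\sum_{i=2}^n|F(\beta_i)-F(\beta_{i-1})|$ by the total variation of $F$ on $[\beta_n,\beta_1]$, which for a concave function is $|F(\beta_1)-F(\beta_n)|$ when $F$ is monotone there and $2F(\beta^\ast)-F(\beta_1)-F(\beta_n)$ when $F$ attains an interior maximum at $\beta^\ast$.

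Finally I would close the two cases. In the monotone case the left–hand side collapses to $\max\{|F(\beta_1)|,|F(\beta_n)|\}$, which is at most $\|u\|_q$ by H\"older's inequality, since $\|a_i\|_p=1$. The interior–maximum case is where the real content lies, and where $\phi(\pm1)=0$ is used: there the bound becomes
\[
\frac{|F(\beta_1)+F(\beta_n)|}{2}+F(\beta^\ast)-\frac{F(\beta_1)+F(\beta_n)}{2},
\]
so everything hinges on the sign condition $F(\beta_1)+F(\beta_n)\ge0$. This I would get from the observation that, $F$ being concave on all of $[-1,1]$, an interior maximum $\beta^\ast$ is automatically the global maximum, so $F$ is nondecreasing on $[-1,\beta^\ast]$ and nonincreasing on $[\beta^\ast,1]$; hence $F(\beta_n)\ge F(-1)=-\eta$ and $F(\beta_1)\ge F(1)=\eta$, the two boundary values being exactly $\mp\eta$ precisely because $\phi(\pm1)=0$. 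Then the displayed quantity equals $F(\beta^\ast)=\langle u,(\phi(\beta^\ast),\beta^\ast)\rangle\le\|u\|_q$, finishing the proof. I expect the main obstacle to be recognising that a crude total–variation estimate does not by itself suffice: one must isolate the sign condition $F(\beta_1)+F(\beta_n)\ge0$ in the interior–maximum case, and it is exactly here that the hypothesis that the $a_i$ lie on the sphere (not merely in the ball) is forced into play.
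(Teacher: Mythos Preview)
Your argument is correct and genuinely different from the paper's. The paper proceeds geometrically: it rewrites the sign combination (after normalising $\theta_1=1$) as an alternating sum $a_{k_1}-a_{k_2}+\dotsb+a_{k_{2l+1}}$ and then inducts using the ``lens'' sets $\Theta_{a,b}=B_{\ell^2_p}\cap(B_{\ell^2_p}+a+b)$, invoking two stated facts about these sets (that $y\in\Theta_{a,b}$ implies $a-y+b\in\Theta_{a,b}$, and a nesting property $\Theta_{a,b}\subset\Theta_{a,c}$ when $b\in\Theta_{a,c}$) to collapse the sum from the middle outwards. Your route instead dualises the zonotope containment to a support-function inequality and reduces everything to the total variation of the concave scalar function $F(\beta)=\xi(1-|\beta|^p)^{1/p}+\eta\beta$ on $[\beta_n,\beta_1]$; the key step---securing $F(\beta_1)+F(\beta_n)\geq0$ in the interior-maximum case via $F(\beta_1)\geq F(1)=\eta$ and $F(\beta_n)\geq F(-1)=-\eta$---is exactly where the hypothesis $a_i\in S_{\ell^2_p}$ (rather than $B_{\ell^2_p}$) is spent, as you correctly anticipate. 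Your approach is entirely self-contained and makes the role of strict convexity transparent through the concavity of $\phi$; the paper's approach is shorter on the page but leans on the two geometric facts about $\Theta_{a,b}$, the second of which (the nesting property) is itself a nontrivial feature of two-dimensional smooth convex bodies and is stated without proof.
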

\begin{proof}
Let $\theta_1,\dotsc,\theta_n\in\{-1,1\}$. Put
\[
x:=\theta_1\cdot\frac{a_1+a_n}{2}+\theta_2\cdot\frac{a_2-a_1}{2}+\dotsb+\theta_n\cdot\frac{a_n-a_{n-1}}{2}.
\]
We will show that $x\in B_{\ell^2_p}$. Without loss of generality we may assume that $\theta_1=1$. Since
\[
\frac{a_n}{2}=\frac{a_1}{2}+\frac{a_2-a_1}{2}+\dotsb+\frac{a_n-a_{n-1}}{2},
\]
we have that
\begin{align*}
x=a_1&+\frac{a_2-a_1}{2}+\dotsb+\frac{a_n-a_{n-1}}{2}+\\
&+\theta_2\cdot\frac{a_2-a_1}{2}+\dotsb+\theta_n\cdot\frac{a_n-a_{n-1}}{2}.
\end{align*}
Hence there is an odd number of increasing indices $k_1,\dotsc, k_{2l+1}$ such that $x$ is representable as
\begin{equation}\label{eq: x esitus}
x=a_{k_{1}}-a_{k_{2}}+a_{k_{3}}-\dotsb -a_{k_{2l}}+a_{k_{2l+1}}.
\end{equation}

To show that $x\in B_{\ell^2_p}$, we use the following geometric properties of $\ell^2_p$.
\begin{fact}\label{fact: Theta}
For $a,b\in S_{\ell^2_p}$, let $\Theta_{a,b}:=B_{\ell_p^2}\cap (B_{\ell_p^2}+(a+b))$.
	\begin{itemize}
		\item[{\rm(a)}]
	    If $a,b\in S_{\ell^2_p}$ and $y\in \Theta_{a,b}$, then $a-y+b\in\Theta_{a,b}$.
		\item[{\rm(b)}] If $a$, $b$, and $c$ are pairwise different elements of $S_{\ell^2_p}$ and $b\in\Theta_{a,c}$, then $\Theta_{a,b}\subset\Theta_{a,c}$.
		\end{itemize}
\end{fact}

Since $a_{k_{l+1}}\in\Theta_{a_{k_{l}}, a_{k_{l+2}}}$, we have that $z:=a_{k_{l}}-a_{k_{l+1}}+a_{k_{l+2}}\in \Theta_{a_{k_{l}}, a_{k_{l+2}}}$ by part (a) of Fact. We can write the middle part of the right hand side of (\ref{eq: x esitus}) as
\[
\dots a_{k_{l-1}}-(a_{k_{l}}-a_{k_{l+1}}+a_{k_{l+2}})+a_{k_{l+3}}\dots
\]
By part (b) of Fact, $z\in \Theta_{a_{k_{l}}, a_{k_{l+2}}}\subset \Theta_{a_{k_{l-1}}, a_{k_{l+3}}}$. Applying part (a) of Fact we have that $a_{k_{l-1}}-z+a_{k_{l+3}}\in \Theta_{a_{k_{l-1}}, a_{k_{l+3}}}$. Continuing in this way, we will finally have $x\in \Theta_{a_{k_{1}}, a_{k_{2l+1}}}\subset B_{\ell^2_p}$.
\end{proof}

\begin{proof}[Proof of Proposition~ \ref{prop: L(c0,l22) is OH}]
	Let $n\in\N$, $S_1,\dots,S_n\in S_{\calL(c_0, \ell_p^2)}$, and $\varepsilon\in(0,1)$. It suffices to show that there is a $T\in S_{\calL(c_0, \ell_p^2)}$ such that 
	\[
	\|S_i+T\|\geq 2-3\eps\quad \text{ for all $i\in\{1,\dots,n\}$}.
	\]
	
	Choose $x_i\in S_{c_0}$ such that $\|S_ix_i\|\geq1-\eps$. Without loss of generality we may assume that $x_1,\dots,x_n$ are finitely supported, that is, there is a $N_1\in\N$ such that $x_1,\dots,x_n\in\spn\{e_1,\dots,e_{N_1}\}$.
	
	Since $S_1,\dots, S_n$ are finite rank operators and $(e_k)$ is a weakly null sequence in $c_0$, there is a $N_2\in\N$ such that $\|S_ie_k\|\leq \varepsilon/n$ for all $i\in\{1,\dots,n\}$ and $k\geq N_2$. Take $N=\max\{N_1,N_2\}$.
	
	For all $i\in\{1,\dots,n\}$, put $a_i:=S_ix_i/\|S_ix_i\|$. By reordering $a_1,\dots,a_n$ and by replacing $a_i$ with $-a_i$ if necessary, we may assume that $a_1,\dots,a_n$ satisfy the assumptions of Lemma~\ref{lem: pm circle}. 
	
	Define $T\colon c_0\rightarrow \ell_p^2$ by
	\[
	Te_{N+1}=\frac{a_1+a_n}{2},\quad Te_{N+2}=\frac{a_2-a_1}{2},\quad \dotsc,\quad Te_{N+n}=\frac{a_n-a_{n-1}}{2},
	\]
	and $Te_k=0$, if $k\in\N\setminus\{N+1, \dots, N+n\}$.
	
	By Lemma~\ref{lem: pm circle}, $\|T\|\leq 1$. On the other hand, $\|T\|\geq 1$, because $T(e_{N+1}+\dots+e_{N+n})=a_n$. Thus $\|T\|=1$.
	
	Fix $i\in\{1,\dots,n\}$. Choose $\theta_1,\dots,\theta_n\in\{-1,1\}$ so that
	\[
	\theta_1\cdot\frac{a_1+a_n}{2}+\theta_2\cdot\frac{a_2-a_1}{2}+\dots+\theta_n\cdot\frac{a_n-a_{n-1}}{2}=a_i.
	\]
	Let $y_i:=\theta_1e_{N+1}+\dots+\theta_n e_{N+n}$. Since  $Ty_i=a_i=S_ix_i/\|S_ix_i\|$, $Tx_i=0$, and $x_i+y_i\in S_{c_0}$, we get
	\begin{align*}
	\|S_i+T\|&\geq \|(S_i+T)(x_i+y_i)\|\\
	&=\|S_ix_i+S_iy_i+Ty_i\|\\
	&\geq \|S_ix_i+Ty_i\|-\|S_iy_i\|\\
	&\geq 2\|S_ix_i\|-\eps\geq 2-3\eps.
	\end{align*}
\end{proof}

\section{Necessary conditions for roughness in spaces of operators}\label{sec: necessary conditions}

In this section, we first prove a quantitative version of Theorem~\ref{thm: L(X,Y) OH implies Y is OH} in terms of roughness. Our main result is a quantitative version of Theorem~\ref{thm: L(X,Y) OH implies Y is OH} for weakly octahedral Banach spaces.

Recall that a Banach space is non-rough if and only if its dual unit ball has weak$^\ast$ slices of arbitrarily small diameter \cite[Proposition~1]{john_rough_1978}.

\begin{theorem}\label{thm: L(X,Y) and Xs non-rough implies Y is rough}
Let $X$ and $Y$ be Banach spaces, let $H$ be a closed subspace of $\lxy$ containing the finite rank operators, and let $\delta>0$.
\begin{itemize}
\item[{\rm(1)}]
Let $H$ be $\delta$-average rough.
\begin{itemize}
\item[{\rm(a)}]
If $\Xs$ is non-rough, then $Y$ is $\delta$-average rough.
\item[{\rm(b)}]
If $Y$ is non-rough, then $\Xs$ is $\delta$-average rough.
\end{itemize}
\item[{\rm(2)}]
Let $H$ be $\delta$-rough.
\begin{itemize}
\item[{\rm(a)}]
If $\Xs$ is non-rough, then $Y$ is $\delta$-rough.
\item[{\rm(b)}]
If $Y$ is non-rough, then $\Xs$ is $\delta$-rough.
\end{itemize}
\end{itemize}
\end{theorem}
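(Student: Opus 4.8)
The plan is to argue entirely on the dual side, using the three characterizations recalled in the excerpt: $H$ being $\delta$-rough (resp.\ $\delta$-average rough) means every weak$^\ast$ slice (resp.\ every convex combination of weak$^\ast$ slices) of $B_{H^\ast}$ has diameter $\geq\delta$, while $\Xs$ (resp.\ $Y$) being non-rough means $B_{\Xss}$ (resp.\ $B_{\Ys}$) has weak$^\ast$ slices of arbitrarily small diameter. I would treat (1)(a) in detail; (2)(a) is the same argument with a single slice in place of a convex combination, and part (b) is symmetric with the roles of $\Xs$ and $Y$ interchanged. For (1)(a), I want to show that an arbitrary convex combination $\sum_i\lambda_i V_i$ of weak$^\ast$ slices $V_i=\{\ys\in B_{\Ys}\colon \ys(y_i)>1-\alpha\}$ of $B_{\Ys}$ (with $y_i\in S_Y$) has diameter $\geq\delta$. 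First I would fix, using non-roughness of $\Xs$, a functional $\xs\in S_{\Xs}$ and parameters $\beta,\eta>0$ (with $\beta$ as small as I like, since shrinking a slice keeps it small in diameter) so that $\{\xss\in B_{\Xss}\colon\xss(\xs)>1-\beta\}$ has diameter $<\eta$, together with a centre $x_0\in B_X$, $\xs(x_0)>1-\beta$; then every $x\in B_X$ with $\xs(x)>1-\beta$ is within $\eta$ of $x_0$. The rank-one operators $\xs\otimes y_i$ lie in $S_H$, so $W_i:=\{\Phi\in B_{H^\ast}\colon\Phi(\xs\otimes y_i)>1-\gamma\}$ are weak$^\ast$ slices of $B_{H^\ast}$. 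Since $H$ is $\delta$-average rough, $\diam\sum_i\lambda_i W_i\geq\delta$, so I may pick $\Phi_i,\Phi_i'\in W_i$ with $\n{\Psi}>\delta-\eps$, where $\Psi:=\sum_i\lambda_i(\Phi_i-\Phi_i')$. The map $j\colon Y\to H$, $j(y)=\xs\otimes y$, is an isometry, so $y_i^\ast:=\jast\Phi_i$ and ${y_i'}^\ast:=\jast\Phi_i'$ lie in $B_{\Ys}$ and, because $y_i^\ast(y_i)=\Phi_i(\xs\otimes y_i)>1-\gamma$, they lie in $V_i$ (taking $\gamma\leq\alpha$). Then $\jast\Psi=\sum_i\lambda_i(y_i^\ast-{y_i'}^\ast)$ is a difference of two points of $\sum_i\lambda_i V_i$, and it remains to bound $\n{\jast\Psi}_{\Ys}$ from below.

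The crux, and the main obstacle, is the transfer estimate $\n{\jast\Psi}_{\Ys}\geq\n{\Psi}_{H^\ast}-\text{(small)}$, i.e.\ that the norm of $\Psi$ is essentially attained on operators of the form $\xs\otimes y$. I would reduce it to the claim that for each $\Phi\in W_i$ and each $A\in B_H$ one has $\abs{\Phi(A)-\Phi(\xs\otimes Ax_0)}<\eps'$, with $\eps'\to0$ as $\beta,\eta,\gamma\to0$; granting this and using $j(Ax_0)=\xs\otimes Ax_0$ and $\n{Ax_0}\leq1$, summation over $i$ gives $\Psi(A)\leq\jast\Psi(Ax_0)+2\eps'\leq\n{\jast\Psi}+2\eps'$, and the supremum over $A\in B_H$ yields the estimate, hence $\diam\sum_i\lambda_i V_i\geq\n{\jast\Psi}>\delta-\eps-2\eps'$. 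To prove the claim I would set $B:=A-\xs\otimes Ax_0\in H$ and estimate $\n{\xs\otimes y_i+sB}=\sup_{x\in B_X}\n{\xs(x)y_i+s\,Bx}$: whenever $\abs{\xs(x)}>1-\beta$, so that $x$ or $-x$ lies in the small slice and is thus within $\eta$ of $x_0$, one gets $\n{Bx}=\n{Ax-\xs(x)Ax_0}\leq\eta+\beta$, while for $\abs{\xs(x)}\leq1-\beta$ the factor $\abs{\xs(x)}$ absorbs the crude bound $\n{Bx}\leq2$; hence $\n{\xs\otimes y_i+sB}\leq1+\abs{s}(\eta+\beta)$ once $\abs{s}\leq\beta/(2-\eta-\beta)$. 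Feeding this into $\Phi(\xs\otimes y_i)+s\Phi(B)\leq\n{\xs\otimes y_i+sB}$, using $\Phi(\xs\otimes y_i)>1-\gamma$ and $\n{\Phi}\leq1$, and taking $s=\pm\beta/(2-\eta-\beta)$, bounds $\abs{\Phi(B)}$ by $(\eta+\beta)+\gamma(2-\eta-\beta)/\beta$, which is the required $\eps'$ once $\beta,\eta$ are small and $\gamma\ll\beta$.

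For part (b) the same scheme works with $k\colon\Xs\to H$, $k(\xs)=\xs\otimes y_0$, where now $y_0\in S_Y$ and the small slice $\{\ys\in B_{\Ys}\colon\ys(y_0)>1-\beta\}$ of $B_{\Ys}$ comes from non-roughness of $Y$; I approximate $A$ by $(A^\ast y_0^\ast)\otimes y_0$ (with $y_0^\ast\in S(y_0,\beta)$) and push $\Psi$ down to $\Xss$ by $k^\ast$, landing in the prescribed slices $V_i=\{\xss\in B_{\Xss}\colon\xss(x_i^\ast)>1-\alpha\}$. The one genuinely different point is the transfer estimate: here $B:=A-(A^\ast y_0^\ast)\otimes y_0$ need not be small on any slice, but it maps into the near-kernel of $y_0^\ast$ (indeed $\abs{y_0^\ast(Bx)}\leq\beta$), so the dangerous first-order term cancels provided the norm of $Y$ is nearly smooth at $y_0$. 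I would extract this smoothness from the small slice in the form $\n{y_0+sz}\leq1+s\,y_0^\ast(z)+s\eta$ for $\n{z}\leq1$ and $0<s<\beta/2$ (any near-norming functional of $y_0+sz$ is forced into the slice, hence within $\eta$ of $y_0^\ast$), and then rescale, splitting on the size and sign of $\xs(x)$, to obtain $\n{\xs\otimes y_0+sB}\leq1+\abs{s}(\beta+2\eta)$; the rest is identical.

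Finally I should stress \emph{why} I avoid the seemingly more direct primal route of transferring the witness $T$ from the roughness of $H$ to the vector $Tx_0\in Y$. In that approach the value $\xs(x_+)$ at a point $x_+$ nearly norming $\n{S_i+T}$ is only within $O(\n{T})$ of $1$, so the transfer loses an additive error of the same order $\n{T}$ as the roughness gain $(\delta-\eps)\n{T}$, which degrades the constant from $\delta$ to something like $\delta-2$. Passing to the dual decouples the transfer error $\eps'$ from the perturbation scale $\n{T}$: there $\eps'$ is a genuine small constant controlled only by the diameter of the small slice and by $\gamma$, so the sharp value $\delta$ survives. This is exactly the asymmetry that makes the slice formulation, rather than the limsup formulation, the right vehicle for all four assertions.
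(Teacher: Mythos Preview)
Your dual-side argument for (1)(a) is correct and complete enough to be made rigorous: the transfer estimate $|\Phi(A-\xs\otimes Ax_0)|<\eps'$ for $\Phi\in W_i$, obtained by bounding $\|\xs\otimes y_i+sB\|$ and playing $s$ against $\gamma$, does exactly what you claim. The sketch for (b) also works, though the rescaling step (splitting on the size of $x_i^\ast(x)$ and absorbing $\|Bx\|\leq 2$ into the smoothness inequality) needs a little more care than you indicate; the range of admissible $s$ ends up of order $\beta$, which is fine.

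However, the paper takes precisely the primal route you dismiss in your last paragraph, and it does \emph{not} degrade the constant. Your objection misses the role of the small-diameter slice. The paper fixes the slice $S(\xs,\alpha)$ of diameter $<\eps$ \emph{first}, then chooses $\|T\|<\alpha/3$; this forces every near-norming point $x_{i,j}$ (for both $\|S_i+T\|$ and $\|S_i-T\|$) into $S(\xs,\alpha)$, because $\xs(x_{i,j})\geq\|S_i\|-3\|T\|>1-\alpha$. The transfer error is then $\|T(x_{1,1}-x_{i,j})\|\leq\|T\|\cdot\|x_{1,1}-x_{i,j}\|<\eps\|T\|$, not $O(\|T\|)$: the loss is $\eps\|T\|$, comparable to the gain $(\delta-\eps)\|T\|$, and the sharp $\delta$ survives. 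In other words, the paper exploits the small slice through its \emph{diameter}, not through the closeness of $\xs(x_{i,j})$ to~$1$; you were analysing a naive version of the primal argument that ignores the diameter hypothesis altogether.

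As a comparison: the paper's primal proof is shorter and avoids any discussion of $H^\ast$ or weak$^\ast$ density of elementary tensors. Your dual proof, on the other hand, makes the mechanism more transparent---it shows directly that functionals in the slice $W_i$ are essentially supported on $\ran j$---and the symmetry between (a) and (b) is perhaps cleaner on the dual side, since the two halves become the adjoint maps $j^\ast$ and $k^\ast$ rather than two separate limsup computations. Both are valid; just retract the claim that the primal approach necessarily loses the constant.
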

\begin{proof}
(1a).
Let $n\in\N$, $y_1,\dots,y_n\in S_Y$, and $\eps\in(0,1/3)$. For the $\delta$-average roughness of $Y$, it suffices to find a $z\in Y$ with $\|z\|<\eps$ such that
\[
\frac{1}{n}\sum_{i=1}^{n}\bigl(\|y_i+z\|+\|y_i-z\|\bigr)>2+(\delta-5\eps)\|z\|.
\]

Since $\Xs$ is non-rough, there are $\xs\in S_{\Xs}$ and $\alpha\in(0,3\eps)$ such that,
for the slice $S(\xs,\alpha):=\{x\in B_X\colon\,\xs(x)>1-\alpha\}$, one has $\diam S(\xs,\alpha)<\eps$.

Let $S_i:=\xast\otimes y_i$ for every $i\in\{1,\dots,n\}$. Since $H$ is $\delta$-average rough, there is a $T\in H$ with $\|T\|<\dfrac\alpha3$ such that
\[
\frac{1}{n}\sum_{i=1}^{n}\bigl(\|S_i+T\|+\|S_i-T\|\bigr)>2+(\delta-\eps)\|T\|.
\]
For $j\in\{1,2\}$, choosing $x_{i,j}\in S_X$ and $\ys_{i,j}\in S_{\Ys}$ so that
\begin{align*}
	\xs(x_{i,j})\,\ys_{i,j}(y_i)+(-1)^{j+1}\ys_{i,j}(Tx_{i,j})&=\ys_{i,j}(S_ix_{i,j}+(-1)^{j+1}Tx_{i,j})\\
	&\geq \|S_i+(-1)^{j+1}T\|-\eps\|T\|,
\end{align*}
one may assume that both $\xs(x_{i,j})>0$ and $\ys_{i,j}(y_i)>\nobreak0$, and thus $x_{i,j}\in S(\xs,\alpha)$, because
	\begin{align*}
	\xast(x_{i,j})&\geq\|S_i+(-1)^{j+1}T\|-\eps\|T\|-|\ys_{i,j}(Tx_{i,j})|\\
&\geq\|S_i\|-\|T\|-\eps\|T\|-\|T\|\geq\|S_i\|-3\|T\|>1-\alpha.\\
		\end{align*}
		
	Letting $z:=Tx_{1,1}$, we have that $\|z\|\leq\|T\|<\dfrac\alpha3<\eps$ and, for every $i\in\{1,\dots,n\}$,
	\begin{align*}
	\|y_i+z\|&+\|y_i-z\|=\|y_i+Tx_{1,1}\|+\|y_i-Tx_{1,1}\|\\
	&\geq \|y_i+Tx_{i,1}\|+\|y_i-Tx_{i,2}\|\\
	&\quad-\|Tx_{1,1}-Tx_{i,1}\|-\|Tx_{1,1}-Tx_{i,2}\|\\
	&\geq \yast_{i,1}(y_i+Tx_{i,1})+\yast_{i,2}(y_i-Tx_{i,2})\\
	&\quad-\|T\|\,\|x_{1,1}-x_{i,1}\|-\|T\|\,\|x_{1,1}-x_{i,2}\|\\
	&\geq \xast(x_{i,1})\yast_{i,1}(y_i)+\yast_{i,1}(Tx_{i,1})\\
	&\quad+\xast(x_{i,2})\yast_{i,2}(y_i)-\yast_{i,2}(Tx_{i,2})-2\eps\|T\|\\
	&\geq \|S_i+T\|-\eps\|T\|+\|S_i-T\|-\eps\|T\|-2\eps\|T\|\\
	&= \|S_i+T\|+\|S_i-T\|-4\eps\|T\|,\\
	\end{align*}
	
	and thus
	
	\begin{align*}
	\frac{1}{n}\sum_{i=1}^{n}\big(\|y_i+z\|+\|y_i-z\|\big)&\geq \frac{1}{n}\sum_{i=1}^{n}\big(\|S_i+T\|+\|S_i-T\|\big)-4\eps\|T\|\\
	&>2+(\delta-5\eps)\|T\|\geq 2+(\delta-5\eps)\|z\|.
	\end{align*}
	
(1b). The proof is similar to that of (1a).	
\medskip

(2). The proof is exactly that of (1) with $n=1$.
\end{proof}

Note that Theorem~\ref{thm: L(X,Y) and Xs non-rough implies Y is rough}(1) with $\delta=2$ is exactly Theorem~\ref{thm: L(X,Y) OH implies Y is OH}. Combining Theorems~\ref{thm: L(X,Y) OH when Y has P2} and \ref{thm: L(X,Y) and Xs non-rough implies Y is rough} improves \cite[Corollary~3.11]{becerra_guerrero_octahedral_2015} as follows.

\begin{corollary}
Let $X$ and $Y$ be Banach spaces, and let $H$ be a closed subspace of $\lxy$ containing the finite rank operators. Suppose that $\Xs$ is non-rough and alternatively octahedral. The following assertions are equivalent:
\begin{itemize}
\item[{\rm(i)}] $H$ is octahedral;
\item[{\rm(ii)}] $Y$ is octahedral.
\end{itemize}
\end{corollary}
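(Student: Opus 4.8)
The plan is to prove the corollary as a straightforward consequence of the two main theorems that have already been established, so the work is almost entirely bookkeeping about the value $\delta=2$ together with the definitions. The key observation to invoke repeatedly is the remark made just after the octahedrality definition: a Banach space is octahedral if and only if it is $2$-average rough, and moreover alternative octahedrality is a weaker property than octahedrality.

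First I would prove the implication (ii)$\Rightarrow$(i). Assume $Y$ is octahedral; equivalently, $Y$ is $2$-average rough. By hypothesis $\Xs$ is alternatively octahedral. These are exactly the hypotheses of Theorem~\ref{thm: L(X,Y) OH when Y has P2}(b) with $\delta=2$, which yields that $H$ is $2$-average rough, i.e.\ octahedral. This direction uses nothing about the non-roughness of $\Xs$.

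Next I would prove (i)$\Rightarrow$(ii), and this is where the non-roughness hypothesis on $\Xs$ enters. Assume $H$ is octahedral, that is, $H$ is $2$-average rough. Since $\Xs$ is non-rough by assumption, Theorem~\ref{thm: L(X,Y) and Xs non-rough implies Y is rough}(1a) with $\delta=2$ applies and gives that $Y$ is $2$-average rough, which is to say $Y$ is octahedral. Combining the two implications establishes the equivalence.

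I do not anticipate a genuine obstacle here, since both directions are immediate instances of the quantitative theorems at the distinguished value $\delta=2$. The only point demanding a little care is to make sure the right hypotheses are lined up with the right theorem: the alternative octahedrality of $\Xs$ feeds Theorem~\ref{thm: L(X,Y) OH when Y has P2}(b) in the forward direction, while the non-roughness of $\Xs$ feeds Theorem~\ref{thm: L(X,Y) and Xs non-rough implies Y is rough}(1a) in the reverse direction; the two standing assumptions on $\Xs$ are thus each used in exactly one implication. A final sentence would note that this recovers and strengthens \cite[Corollary~3.11]{becerra_guerrero_octahedral_2015}, whose hypotheses required more of $\Xs$.
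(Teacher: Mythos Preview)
Your proposal is correct and matches the paper's approach exactly: the paper presents this corollary without a written proof, simply remarking that it follows by combining Theorems~\ref{thm: L(X,Y) OH when Y has P2} and~\ref{thm: L(X,Y) and Xs non-rough implies Y is rough}, and your argument spells out precisely that combination at $\delta=2$. Your observation about which hypothesis on $\Xs$ is used in which direction is accurate and is a nice clarification.
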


The following definition is motivated by the known dual characterizations of (average) roughness in terms of the diameter of (convex combinations of) weak$^\ast$ slices (see Introduction). 

\begin{definition}
Let $X$ be a Banach space and $\delta>0$. We say that the space $X$ is \emph{weakly $\delta$-average rough} if every non-empty relatively weak$^{\ast}$ open subset of $B_{\Xs}$ has diameter greater than or equal to $\delta$.
\end{definition}

\begin{theorem}\label{thm: omnibus thm for the weak* d-delta-P}
Let $\delta>0$.
The following assertions are equivalent:
\begin{itemize}
\item[{\rm(i)}]
$X$ is weakly $\delta$-average rough;
%
\item[{\rm(ii)}]
whenever $n\in\N$, $x_1,\dotsc,x_n\in S_X$,  $\xs\in B_{\Xs}$, and $\eps, t_0>\nobreak0$,
there is a $y\in S_X$ such that,
\[
\inf_{\substack{i\in\{1,\dotsc,n\}\\t\geq t_0}}\frac{\|x_i+ty\|-\xs(x_i)}{t}-\sup_{\substack{i\in\{1,\dotsc,n\}\\t\geq t_0}}\frac{\|x_i-ty\|-\xs(x_i)}{-t}>\delta-\eps;
\]
\item[{\rm(iii)}]
whenever $E$ is a finite-dimensional subspace of $X$,  $\xs\in B_{\Xs}$, and $\eps,t_0>\nobreak0$,
there is a $y\in S_X$ such that
\begin{equation}\label{eq: weakly locally uniformly delta-rough ekv. tingimus 2}
\inf_{\substack{x\in S_E\\t\geq t_0}}\frac{\|x+ty\|-\xs(x)}{t}-\sup_{\substack{x\in S_E\\t\geq t_0}}\frac{\|x-ty\|-\xs(x)}{-t}>\delta-\eps;
\end{equation}
\item[{\rm(iv)}]
whenever $E$ is a finite-dimensional subspace of $X$, $\xs\in B_{\Xs}$, and $\eps>\nobreak0$,
there are $\xs_1,\xs_2\in\Xs$ with $\|\xs_1\|,\|\xs_2\|\leq 1+\eps$, and $y\in S_X$ satisfying
\begin{equation*}
\xs_1|_E=\xs_2|_E=\xs|_E\quad\text{and}\quad \xs_1(y)-\xs_2(y)>\delta-\eps;
\end{equation*}
\item[{\rm(v)}]
whenever $n\in\N$, $x_1,\dotsc,x_n\in S_X$, $\xs\in S_{\Xs}$, and $\eps>0$,
there are $\xs_1,\xs_2\in B_{\Xs}$ and $y\in B_X$ satisfying
\[
\bigl|\xs_j(x_i)-\xs(x_i)\bigr|<\eps\quad\text{for all $i\in\{1,\dotsc,n\}$ and $j\in\{1,2\}$,}
\]
and $\xs_1(y)-\xs_2(y)>\delta-\eps$.
\end{itemize}
\end{theorem}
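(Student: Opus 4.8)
The plan is to establish the chain of equivalences (i)~$\Leftrightarrow$~(v)~$\Leftrightarrow$~(iv)~$\Leftrightarrow$~(iii)~$\Leftrightarrow$~(ii), each link by a self-contained argument; this parallels the proof of the weakly octahedral characterization (the case $\delta=2$) in \cite{haller_duality_2015}. One preliminary remark disposes of the degenerate case: if $X$ is finite-dimensional, the weak$^\ast$ topology on $\Xs$ is the norm topology, so a small ball around an interior point of $B_{\Xs}$ witnesses the failure of (i), and (iv), (v) fail just as directly (take $E=X$ in (iv), a basis in (v)); since (iv)~$\Leftrightarrow$~(iii)~$\Leftrightarrow$~(ii) will be proved with no dimension assumption, all five conditions fail together and the statement is vacuous. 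Hence I will assume $X$ infinite-dimensional, which is exactly what is needed to normalize base functionals into $S_{\Xs}$.

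For (i)~$\Leftrightarrow$~(v) I unwind the definitions. For (i)~$\Rightarrow$~(v) I apply (i) to the non-empty relatively weak$^\ast$ open set $\{\zs\in B_{\Xs}\colon |\zs(x_i)-\xs(x_i)|<\eps\}$, whose diameter is $\geq\delta$, and read off $\xs_1,\xs_2$ together with a $y\in B_X$ almost realizing $\|\xs_1-\xs_2\|$. For (v)~$\Rightarrow$~(i) I shrink a given open set to a basic neighborhood, normalize the defining vectors into $S_X$, and (since finitely many functionals never span the infinite-dimensional $X$) replace the possibly-interior centre by a $\us\in S_{\Xs}$ agreeing with it on those vectors, to which (v) applies. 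The neighboring link (v)~$\Leftrightarrow$~(iv) connects the two ``Helly-dual'' formulations, approximate agreement on finitely many vectors with exact norm versus exact agreement on a subspace with norm $\leq 1+\eps$. Here (iv)~$\Rightarrow$~(v) is a rescaling into $B_{\Xs}$; for (v)~$\Rightarrow$~(iv) I extend $\xs|_E$ to some $\us\in S_{\Xs}$, apply (v) to a fine net of $S_E$ centred at $\us$, and then correct the two resulting functionals on $E$ by adding Hahn--Banach extensions of the small errors $(\xs-\zs_j)|_E$: the corrections have small norm, so exact agreement on $E$ is bought by enlarging the norm to $1+\eps$ while changing the separating value at $y$ only slightly.

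The core is (iii)~$\Leftrightarrow$~(iv), with the compactness reduction (ii)~$\Leftrightarrow$~(iii) alongside. For the reduction, (iii)~$\Rightarrow$~(ii) is immediate on taking $E=\spn\{x_1,\dots,x_n\}$, while (ii)~$\Rightarrow$~(iii) follows by applying (ii) to a fine net of the compact sphere $S_E$ and observing that for $t\geq t_0$ the slopes $x\mapsto(\|x\pm ty\|-\xs(x))/(\pm t)$ are Lipschitz in $x$ with constant $\leq 2/t_0$, so their infimum and supremum over $S_E$ are controlled on the net. For (iii)~$\Leftrightarrow$~(iv) I read the two terms of (iii), call them $P$ and $Q$, as extension values on $F=E+\R y$: setting $\eps':=2t_0/(1-t_0)$, the maximal value $M_{\eps'}=\inf_{x\in E}\bigl((1+\eps')\|x+y\|-\xs(x)\bigr)$ and the minimal value $m_{\eps'}=\sup_{x\in E}\bigl(\xs(x)-(1+\eps')\|x-y\|\bigr)$ of a norm-$(1+\eps')$ extension of $\xs|_E$ at $y$ satisfy $M_{\eps'}\geq P$ and $m_{\eps'}\leq Q$; the extremal extensions $\xs_1,\xs_2$, extended to $X$ by Hahn--Banach, then give $\xs_1(y)-\xs_2(y)=M_{\eps'}-m_{\eps'}\geq P-Q>\delta-\eps$, which is (iv). Conversely (iv)~$\Rightarrow$~(iii) is easier: from $\|x+ty\|\geq(1+\eps)^{-1}\xs_1(x+ty)$ one divides by $t\geq t_0$ and lets the auxiliary parameter tend to $0$.

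I expect the main obstacle to be the norm bookkeeping in (iii)~$\Leftrightarrow$~(iv): the two inequalities $M_{\eps'}\geq P$ and $m_{\eps'}\leq Q$ are precisely where the truncation $t\geq t_0$ and the norm slack $1+\eps'$ must be matched. The clean route is to prove, by homogeneity, that $\xs(x)+P\leq(1+\eps')\|x+y\|$ for every $x\in E$, splitting into the regime $\|x\|\leq 1/t_0$ (where it is the definition of $P$ verbatim) and the regime $\|x\|>1/t_0$ (where a crude triangle estimate suffices exactly because $\eps'=2t_0/(1-t_0)$), and symmetrically for $Q$. A convenient by-product is that these inequalities force $y\notin E$ whenever $P-Q>0$, so the one-dimensional extension on $F$ is genuine and no separate case is needed; the remaining work is routine $\eps$-$t_0$-net bookkeeping.
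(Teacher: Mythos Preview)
Your proposal is correct, and the core technical step (iii)$\Rightarrow$(iv) is essentially the paper's own argument via Lemma~\ref{lem: f-naali jatkamine E-lt span(E cup {y})-le}: your extremal extension values $M_{\eps'}$, $m_{\eps'}$ with $\eps'=2t_0/(1-t_0)$ reproduce precisely the functionals $g_1,g_2$ the paper builds on $\spann(E\cup\{y\})$ with $\gamma=b$ and $\gamma=a$, the norm bound $(1+t_0)/(1-t_0)$ in that lemma is exactly your $1+\eps'$, and your two-regime verification of $M_{\eps'}\geq P$ is the content of the lemma's proof rewritten in the dual variable.

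The organization, however, is genuinely different. The paper closes the cycle (v)$\Rightarrow$(ii)$\Rightarrow$(iii)$\Rightarrow$(iv)$\Rightarrow$(v), giving a direct proof of (v)$\Rightarrow$(ii) (evaluate the two functionals $u^\ast,v^\ast$ from (v) at $x_i\pm ty$ and divide by $t$) and declaring (iv)$\Rightarrow$(v) obvious; it never proves (iv)$\Rightarrow$(iii) or (v)$\Rightarrow$(iv) as stand-alone implications. You instead run a chain of biconditionals, supplying a direct (v)$\Rightarrow$(iv) by Hahn--Banach error-correction on $E$ and a direct (iv)$\Rightarrow$(iii) by dividing $\|x+ty\|\geq(1+\eps')^{-1}\xs_1(x+ty)$ through by $t$. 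Your route has the merit of making each equivalence self-contained and of isolating the ``Helly-dual'' link (v)$\Leftrightarrow$(iv) cleanly; the paper's route is a bit more economical in that it traverses the conditions only once and needs no separate $\eps$-management for the reverse directions. Your preliminary reduction to infinite-dimensional $X$ is also a point the paper leaves implicit when it calls (i)$\Leftrightarrow$(v) ``obvious''.
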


The proof of the implication (iii)$\Rightarrow$(iv) of Theorem \ref{thm: omnibus thm for the weak* d-delta-P} relies on the following lemma which explains the role of the $\inf$ and $\sup$ in the condition (iii).

\begin{lemma}\label{lem: f-naali jatkamine E-lt span(E cup {y})-le}
Let $E$ be a finite-dimensional subspace of $X$, $y\in S_X\setminus E$, $\xs\in B_{\Xs}$, $t_0\in(0,1)$, and
$\gamma$ is such that
\[
a:=\sup_{\substack{x\in S_E\\t\geq t_0}}\frac{\|x-ty\|-\xs(x)}{-t}
\leq
\gamma
\leq
\inf_{\substack{x\in S_E\\t\geq t_0}}\frac{\|x+ty\|-\xs(x)}{t}=:b.
\]
Then, for the functional
\[
g\colon\spann\bigl(E\cup\{y\}\bigr)\ni x+ty\longmapsto \xs(x)+t\gamma,\quad x\in E,\, t\in\mathbb R,
\]
one has $\|g\|\leq\frac{1+t_0}{1-t_0}$.
\end{lemma}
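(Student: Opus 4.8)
The plan is to bound $|g(v)|/\|v\|$ directly over all $v=x+ty\in\spann(E\cup\{y\})$, first noting that $g$ is well defined and linear: since $y\notin E$, the decomposition $v=x+ty$ with $x\in E$, $t\in\R$ is unique. The starting point is to read off what the hypothesis $a\le\gamma\le b$ really says. The inequality $\gamma\le b$ unwinds to $\xs(u)+\tau\gamma\le\|u+\tau y\|$, that is $g(u+\tau y)\le\|u+\tau y\|$, for every $u\in S_E$ and $\tau\ge t_0$; likewise $a\le\gamma$ unwinds, after multiplying through by $-\tau<0$, to $g(u-\tau y)\le\|u-\tau y\|$ for $u\in S_E$, $\tau\ge t_0$. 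Thus the hypothesis yields only one-sided control of $g$, and only on the ``cone'' where the $y$-coordinate dominates.

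Next I would promote these to the cone inequality $|g(v)|\le\|v\|$, valid whenever $|t|\ge t_0\|x\|$. By homogeneity (write $x=\|x\|u$ with $u\in S_E$ and $\tau=|t|/\|x\|\ge t_0$) the two inequalities above give $g(v)\le\|v\|$ for every such $v$ with $x\ne0$; the case $x=0$ follows once one checks $|\gamma|\le1$, which drops out of the same two inequalities by letting $\tau\to\infty$. Finally, applying $g(v)\le\|v\|$ to $-v$ — which lies in the same cone, its $E$-part being $-x$ and its $y$-coordinate $-t$ — gives $-g(v)\le\|v\|$, so in fact $|g(v)|\le\|v\|$ throughout the cone.

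It remains to treat the complementary region $|t|<t_0\|x\|$ (so $x\ne0$), and this is where the factor $\tfrac{1+t_0}{1-t_0}$ is produced. Here I would exploit that $t\mapsto g(x+ty)=\xs(x)+t\gamma$ is affine: writing $t$ as a convex combination of the two cone-boundary values $\pm t_0\|x\|$, the value $g(x+ty)$ is the same convex combination of the boundary values $g(x+t_0\|x\|y)$ and $g(x-t_0\|x\|y)$, each of which the cone estimate bounds by $(1+t_0)\|x\|$ (the norm of $x\pm t_0\|x\|y$ being at most $\|x\|+t_0\|x\|$). Hence $|g(v)|\le(1+t_0)\|x\|$. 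Combining this with $\|x\|=\|v-ty\|\le\|v\|+|t|<\|v\|+t_0\|x\|$, which rearranges to $\|x\|<\|v\|/(1-t_0)$, gives $|g(v)|<\tfrac{1+t_0}{1-t_0}\|v\|$; and on the cone the sharper $|g(v)|\le\|v\|\le\tfrac{1+t_0}{1-t_0}\|v\|$ holds trivially, so $\|g\|\le\tfrac{1+t_0}{1-t_0}$ in all cases.

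The one genuinely delicate point — the main obstacle — is the passage from the purely one-sided, cone-restricted information supplied by $a\le\gamma\le b$ to a global two-sided norm bound. The affine-interpolation step off the cone is what both recovers control in the region where $E$ dominates and pins down the precise constant $\tfrac{1+t_0}{1-t_0}$, which blows up as $t_0\to1$, matching the loss incurred when $y$ becomes hard to separate from $E$.
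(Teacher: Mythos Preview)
Your argument is correct and follows essentially the same route as the paper: normalize, split into the cone $|t|\ge t_0\|x\|$ where the hypothesis gives $|g(v)|\le\|v\|$, and handle the complementary region by combining $|g(v)|\le(1+t_0)\|x\|$ with $\|x\|\le\|v\|/(1-t_0)$. The only cosmetic difference is that in the complement the paper obtains $|g(v)|\le(1+t_0)\|x\|$ directly from $|x^\ast(x)|\le\|x\|$ and $|\gamma|\le1$ rather than via your affine interpolation from the cone boundary; either way works.
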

\begin{proof}
First observe that $|\gamma|\leq1$, because, letting $t\to\infty$, one obtains $a\geq-1$ and $b\leq1$.

In order that $\|g\|\leq\frac{1+t_0}{1-t_0}$, it suffices to show that, letting $x\in S_E$ and $t>0$, one has
\[
|\xs(x)+t\,\gamma|\leq\frac{1+t_0}{1-t_0}\,\|x+t\,y\|.
\]
For $0<t\leq t_0$, one has
\begin{align*}
\bigl|\xs(x)+t\gamma\bigr|
&\leq1+t=\frac{1+t}{1-t}\bigl(\|x\|-t\,\|y\|\bigr)\leq\frac{1+t}{1-t}\,\|x+ty\|\\
&\leq\frac{1+t_0}{1-t_0}\,\|x+ty\|.
\end{align*}
For $t\geq t_0$, one has, in fact, a stronger inequality:
\[
|\xs(x)+t\,\gamma|\leq\|x+t\,y\|.
\]
Indeed, the latter condition is equivalent to
\[
\dfrac{\|-x-ty\|-\xs(-x)}{-t}\leq\gamma\leq\dfrac{\|x+ty\|-\xs(x)}{t}
\]
which holds because $\gamma\in[a,b]$.
\end{proof}

\begin{proof}[Proof of Theorem \ref{thm: omnibus thm for the weak* d-delta-P}]
(i)$\Leftrightarrow$(v) and (iv)$\Rightarrow$(v) are obvious.

\medskip
(v)$\Rightarrow$(ii).
Assume that (v) holds. Let $n\in\N$, $x_1,\dotsc,x_n\in S_X$, $\xs\in B_{\Xs}$, and $\eps, t_0>0$.
By (v), there are $\us,\vs\in B_{\Xs}$ and $y\in S_X$ such that, for every $i\in\{1,\dotsc,n\}$,
\[
|\us(x_i)-\xs(x_i)|<\eps t_0
\quad
\text{and}\quad
|\vs(x_i)-\xs(x_i)|<\eps t_0,
\]
and
\[
\vs(y)-\us(y)>\delta-\eps.
\]
Now let $i\in\{1,\dotsc,n\}$ and $t\geq t_0$ be arbitrary. Then
\begin{align*}
\vs(y)
&=\frac{\vs(x_i+ty)-\vs(x_i)}{t}\leq\frac{\|x_i+ty\|-\vs(x_i)}{t}\\
&<\frac{\|x_i+ty\|-\xs(x_i)+\eps t_0}{t}
\leq\frac{\|x_i+ty\|-\xs(x_i)}{t}+\eps
\end{align*}
and
\begin{align*}
-\us(y)&=\frac{\us(x_i-ty)-\us(x_i)}{t}\leq\frac{\|x_i-ty\|-\us(x_i)}{t}\\
&<\frac{\|x_i-ty\|-\xs(x_i)+\eps t_0}{t}
\leq\frac{\|x_i-ty\|-\xs(x_i)}{t}+\eps.
\end{align*}
It follows that
\begin{align*}
\inf_{\substack{i\in\{1,\dotsc,n\}\\t\geq t_0}}&\frac{\|x_i+ty\|-\xs(x_i)}{t}-\sup_{\substack{i\in\{1,\dotsc,n\}\\t\geq t_0}}\frac{\|x_i-ty\|-\xs(x_i)}{-t}\\
&\geq\vs(y)-\eps-\us(y)-\eps>\delta-3\eps.
\end{align*}

\medskip
(ii)$\Rightarrow$(iii).
Let $E$ be a finite-dimensional subspace of $X$,  $\xs\in B_{\Xs}$, and $\eps, t_0>\nobreak0$.
It suffices to observe that, letting $y\in S_X$ be produced by (ii) where $\{x_1,\dotsc,x_n\}\subset S_X$ is an $\eps t_0$-net for $S_E$,
the difference of the $\inf$ and $\sup$ in (iii) is greater than $\delta-5\eps$.

\medskip
(iii)$\Rightarrow$(iv).
Assume that (iii) holds. Let $E$ be a finite-dimensional subspace of $X$, $\xs\in B_{\Xs}$, and $\eps>0$.
Choosing $t_0\in(0,1)$ so that $\frac{1+t_0}{1-t_0}<1+\eps$,
let $y\in S_X$ satisfy (\ref{eq: weakly locally uniformly delta-rough ekv. tingimus 2}).
Define $g_1,g_2\in\bigl(\spann(E\cup\{y\})\bigr)^\ast$ in the same manner as $g$ in Lemma \ref{lem: f-naali jatkamine E-lt span(E cup {y})-le}
where, respectively, $\gamma=b$ and $\gamma=a$.
One may let the desired $\xs_1$ and $\xs_2$ be any norm preserving extensions to $X$ of $g_1$ and $g_2$, respectively.
\end{proof}

\begin{theorem}\label{thm: L(X,Y) and Xs non-rough implies Y is weakly rough}
Let $X$ and $Y$ be Banach spaces, let $H$ be a closed subspace of $\lxy$ containing the finite rank operators, and let $\delta>0$. Suppose that $H$ is weakly $\delta$-average rough.
\begin{itemize}

\item[{\rm(a)}]
If $\Xs$ is non-rough, then $Y$ is weakly $\delta$-average rough.
\item[{\rm(b)}]
If $Y$ is non-rough, then $\Xs$ is weakly $\delta$-average rough.
\end{itemize}
\end{theorem}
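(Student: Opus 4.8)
The plan is to establish the equivalent condition~(v) of Theorem~\ref{thm: omnibus thm for the weak* d-delta-P} for the space $Y$ in part~(a) and for $\Xs$ in part~(b), drawing only on the bare definition of weak $\delta$-average roughness of $H$: every non-empty relatively \weakstar\ open subset of $B_{H^\ast}$ has diameter at least $\delta$. In part~(a), non-roughness of $\Xs$ furnishes $\xs\in S_{\Xs}$ and $\alpha,\rho>0$ with $\diam S(\xs,\alpha)<\rho$, where $S(\xs,\alpha)=\{x\in B_X\colon\xs(x)>1-\alpha\}$ (the diameter of such a slice of $B_X$ agreeing with that of the corresponding \weakstar\ slice of $B_{\Xss}$). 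The map $J\colon Y\to H$, $Jy=\xs\otimes y$, is an isometric embedding, so $J^\ast\colon H^\ast\to\Ys$ carries $B_{H^\ast}$ onto $B_{\Ys}$, and for $x_0\in S_X$ the evaluation $E_{x_0}\colon T\mapsto Tx_0$ gives an isometric $E_{x_0}^\ast\colon\Ys\to H^\ast$ with $J^\ast E_{x_0}^\ast=\xs(x_0)\,\mathrm{id}$. Since $J^\ast$ is norm-decreasing, the crux is to guarantee that a pair of functionals realizing a large diameter in $B_{H^\ast}$ is \emph{co-located at} $x_0$, so that no diameter is lost upon applying $J^\ast$.

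The decisive tool is a localization lemma: if $x_0\in S(\xs,\alpha)$, $y_0\in S_Y$ and $\Psi\in B_{H^\ast}$ satisfies $\Psi(\xs\otimes y_0)>1-\gamma$ with $\gamma\le\alpha^2/16$, then
\[
|\Psi(R)|\le\|Rx_0\|+\rho\|R\|+\sqrt{\gamma}\qquad\text{for all }R\in H\text{ with }\|R\|\le2.
\]
To prove it, I start from $\Psi(\xs\otimes y_0+sR)\le\|\xs\otimes y_0+sR\|$ together with $\Psi(\xs\otimes y_0)>1-\gamma$, which give $\Psi(R)<(\|\xs\otimes y_0+sR\|-1+\gamma)/s$ for every $s>0$. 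I then bound $\|\xs\otimes y_0+sR\|=\sup_{x\in B_X}\|\xs(x)y_0+sRx\|$ by splitting $B_X$ into $S(\xs,\alpha)$, its reflection $-S(\xs,\alpha)$, and the equator $\{|\xs(x)|\le1-\alpha\}$: on the slice $\|x-x_0\|<\rho$ forces $\|\xs(x)y_0+sRx\|\le1+s\|Rx_0\|+s\rho\|R\|$, the reflection gives the same bound, and the equator contributes at most $1-\alpha+s\|R\|$, which is dominated once $s\|R\|\le\alpha$. Choosing $s=\sqrt{\gamma}$ and applying the estimate to $\pm R$ yields the lemma.

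For part~(a) I verify~(v): given $y_1,\dots,y_n\in S_Y$, $\ys\in S_{\Ys}$ and $\eps>0$, I fix $\gamma'$ with $3\gamma'=\gamma\le\alpha^2/16$ and choose $y_0\in S_Y$, $x_0\in S_X$ so deep that $1-\ys(y_0),\,1-\xs(x_0)<\gamma'$; then $\Phi:=E_{x_0}^\ast\ys\in S_{H^\ast}$ satisfies $\Phi(\xs\otimes y_0)=\xs(x_0)\ys(y_0)>1-2\gamma'>1-\gamma$. Applying weak $\delta$-average roughness of $H$ to the non-empty \weakstar\ open set
\[
\tilde W:=\bigl\{\Psi\in B_{H^\ast}\colon\Psi(\xs\otimes y_0)>1-\gamma,\ |\Psi(\xs\otimes y_i)-\ys(y_i)|<\eps_0\ (i\le n)\bigr\}
\]
produces $\Phi_1,\Phi_2\in\tilde W$ with $\|\Phi_1-\Phi_2\|>\delta-\eps_1$, hence $T\in B_H$ with $(\Phi_1-\Phi_2)(T)>\delta-\eps_1$. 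Setting $\ys_j:=J^\ast\Phi_j\in B_{\Ys}$ and $z:=Tx_0\in B_Y$, membership in $\tilde W$ gives $|\ys_j(y_i)-\ys(y_i)|<\eps_0\le\eps$, while the localization lemma applied to $R:=T-\xs\otimes Tx_0$ (for which $\|Rx_0\|<\gamma'$ and $\|R\|\le2$) shows that $\ys_1(z)-\ys_2(z)=(\Phi_1-\Phi_2)(\xs\otimes Tx_0)$ exceeds $(\Phi_1-\Phi_2)(T)-2(\gamma'+2\rho+\sqrt{\gamma})$; with the small parameters chosen so that $\eps_1+2(\gamma'+2\rho+\sqrt{\gamma})<\eps$, this is $>\delta-\eps$, establishing~(v) for $Y$.

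Part~(b) is the adjoint-symmetric version: non-roughness of $Y$ supplies a small-diameter \weakstar\ slice $S(y_0,\alpha)=\{\ys\in B_{\Ys}\colon\ys(y_0)>1-\alpha\}$ of $B_{\Ys}$, one uses the isometric $\tilde J\colon\Xs\to H$, $\xs\mapsto\xs\otimes y_0$, with $\tilde J^\ast\colon H^\ast\to\Xss$, the near-normed operator $\xs_0\otimes y_0$ (with $\xs_0\in S_{\Xs}$ deep), and the adjoint evaluation $T\mapsto T^\ast\ys_0$; the localization lemma reads $|\Psi(R)|\le\|R^\ast\ys_0\|+\rho\|R\|+\sqrt{\gamma}$ and is proved by the same computation after passing to adjoints via $(\xs_0\otimes y_0)^\ast\ys=\ys(y_0)\xs_0$. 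Taking $x_0\in B_X$ from Goldstine's theorem to approximate the prescribed $\xss_0\in S_{\Xss}$ on $\xs_1,\dots,\xs_n$ (with $\xs_0$ included among the test functionals, so that $\xs_0(x_0)$ is near $1$), and setting $\xss_j:=\tilde J^\ast\Phi_j$, $w^\ast:=T^\ast\ys_0$, $R:=T-(T^\ast\ys_0)\otimes y_0$ (so $\|R^\ast\ys_0\|<\gamma'$), one concludes exactly as in~(a). The main obstacle throughout is precisely this localization lemma with its parameter balancing: one must choose $x_0,y_0$ deep enough that the center functional near-norms $\xs\otimes y_0$ to within $\gamma\ll\alpha^2$ while the slice keeps diameter $<\rho$, and optimize $s=\sqrt{\gamma}$ so that the slice error and the near-norming tolerance become simultaneously negligible.
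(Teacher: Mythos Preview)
Your argument is correct and proceeds by a genuinely different route from the paper's. The paper uses condition~(iv) of Theorem~\ref{thm: omnibus thm for the weak* d-delta-P} to obtain $\phi_1,\phi_2\in H^\ast$ agreeing exactly with $\phi=\xss\otimes\ys$ on $\spn\{S_0,\dotsc,S_n\}$, then invokes the \weakstar\ density of $\conv(B_X\otimes B_{\Ys})$ in $B_{H^\ast}$ (via Hahn--Banach separation) to approximate each $\phi_j$ by a convex combination $\psi_j=\sum_k\lambda_{j,k}\,x_{j,k}\otimes\ys_{j,k}$; the near-norming condition $\psi_j(S_0)>1-2\alpha^2$ forces most of the mass $\lambda_{j,k}$ onto indices with $x_{j,k}\in S(\xs,\alpha)$, whence $\ys_j:=\sum_k\lambda_{j,k}\ys_{j,k}$ works. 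You instead stay with the raw definition (condition~(v)), pull the functionals down via $\ys_j=J^\ast\Phi_j$ directly, and replace the mass-concentration argument by your localization lemma, a clean perturbation bound $|\Psi(R)|\le\|Rx_0\|+\rho\|R\|+\sqrt{\gamma}$ obtained by estimating $\|\xs\otimes y_0+sR\|$ over the three regions of $B_X$ and optimizing $s=\sqrt{\gamma}$. Your approach is more elementary in that it avoids both the separation theorem and the decomposition into elementary tensors, and it makes the role of the small-diameter slice completely explicit; the paper's approach, on the other hand, yields a more concrete description of $\ys_j$ and ties in naturally with the tensor-product structure of $H^\ast$. The treatment of part~(b) via the adjoint-symmetric lemma and Goldstine's theorem is likewise sound.
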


\begin{proof}
(a).
Let $n\in\N$, $y_1,\dotsc,y_n\in S_Y$, $\ys\in S_{\Ys}$, and $\varepsilon\in(0,1)$. For the weak $\delta$-average roughness of $Y$, by Theorem~\ref{thm: omnibus thm for the weak* d-delta-P}, it suffices to find $y^\ast_1, y^\ast_2\in B_{Y^\ast}$ and $y\in B_Y$ such that 
\[
\bigl|\ys_j(y_i)-\ys(y_i)\bigr|<6\eps\quad\text{for all $i\in\{1,\dotsc,n\}$ and $j\in\{1,2\}$,}
\]
and
\[
\ys_1(y)-\ys_2(y)>\delta-13\eps.
\]

Since $\Xs$ is non-rough, there are $\xs\in S_{\Xs}$ and $\alpha\in(0,\eps)$ such that,
for the slice $S(\xs,\alpha):=\{x\in B_X\colon\,\xs(x)>1-\alpha\}$, one has $\diam S(\xs,\alpha)<\eps$.

Choose $\xss\in S_{\Xss}$ and $y_0\in S_Y$ so that $\xss(\xs)=1$ and $\ys(y_0)>1-\alpha^2$, and put
$S_i:=\xs\otimes y_i\in H$ for every $i\in\{0,1,\dotsc,n\}$, and $\phi:=\xss\otimes\ys\in H^\ast$, where
\[
(\xss\otimes\ys)(S)=\xss(S^\ast \ys),\quad S\in H.
\]
Since $H$ is weakly $\delta$-average rough, by Theorem~\ref{thm: omnibus thm for the weak* d-delta-P}, there are $\phi_1,\phi_2\in H^\ast$ with $\|\phi_1\|,\|\phi_2\|< 1+\alpha^2$, and $T\in S_H$ such that
\[
\phi_1(S_i)=\phi_2(S_i)=\phi(S_i)=\ys(y_i)\quad\text{for all $i\in\{0,1,\dotsc,n\}$,}
\]
and
\[
\phi_1(T)-\phi_2(T)>\delta-\eps.
\]
Denote by $B_X\otimes B_{\Ys}$ the set of functionals in $H^\ast$ of the form $x\otimes\ys$, where $x\in B_X$ and $\ys\in\Ys$.
Since, for every $S\in H$, there is some $f$ in the weak$^\ast$ closure of $B_X\otimes B_{\Ys}$ in $H^\ast$ 
such that $f(S)=\|S\|$, by the Hahn--Banach separation theorem,
it quickly follows that $\conv(B_X\otimes B_{\Ys})$ is weak$^\ast$ dense in $B_{H^\ast}$. 
Thus, for all $j\in\{1,2\}$, observing that $\bigl\|\frac1{\|\phi_j\|}\phi_j-\phi_j\bigr\|<\alpha^2$, there are
\[
\psi_j:=\sum_{k=1}^{m_j}\lambda_{j,k}x_{j,k}\otimes\ys_{j,k}\in\conv(B_X\otimes B_{\Ys})
\]
such that $\bigl|\psi_j(T)-\phi_j(T)\bigr|<\alpha^2$ and, for all $i\in\{0,1,\dotsc,n\}$,
\[
\biggl|\sum_{k=1}^{m_j}\lambda_{j,k}\xs(x_{j,k})\ys_{j,k}(y_i)-\ys(y_i)\biggr|=\bigl|\psi_j(S_i)-\phi_j(S_i)\bigr|<\alpha^2.
\]
Letting
\[
K_j:=\bigl\{k\in\{1,\dotsc,m_j\}\colon\, \xs(x_{j,k})>1-\alpha\bigr\}
\quad\text{and}\quad
\lambda_j:=\sum_{k\notin K_j}\lambda_{j,k},
\]
one has
\begin{align*}
1-2\alpha^2
&<\ys(y_0)-\alpha^2
<\sum_{k=1}^{m_j}\lambda_{j,k}\xs(x_{j,k})\ys_{j,k}(y_0)\\
&=\sum_{k\notin K_j}\lambda_{j,k}\xs(x_{j,k})\ys_{j,k}(y_0)+\sum_{k\in K_j}\lambda_{j,k}\xs(x_{j,k})\ys_{j,k}(y_0)\\
&\leq(1-\alpha)\sum_{k\notin K_j}\lambda_{j,k}+\sum_{k\in K_j}\lambda_{j,k}
=(1-\alpha)\,\lambda_j+1-\lambda_j\\
&=1-\alpha\,\lambda_j
\end{align*}
whence $\lambda_j<2\alpha$.
Now, putting $\ys_j:=\sum_{k=1}^{m_j} \lambda_{j,k}\ys_{j,k}$, one has
\begin{align*}
\bigl|\ys_j(y_i)-\ys(y_i)\bigr|
&=\biggl|\sum_{k=1}^{m_j} \lambda_{j,k}\ys_{j,k}(y_i)-\ys(y_i)\biggr|\\
&\leq\biggl|\sum_{k=1}^{m_j} \lambda_{j,k}\xs(x_{j,k})\ys_{j,k}(y_i)-\ys(y_i)\biggr|\\
&\quad\quad +\sum_{k\notin K_j} \lambda_{j,k}\bigl|1-\xs(x_{j,k})\bigr|\,\bigl|\ys_{j,k}(y_i)\bigr|\\
&\quad\quad +\sum_{k\in K_j} \lambda_{j,k}\bigl|1-\xs(x_{j,k})\bigr|\,\bigl|\ys_{j,k}(y_i)\bigr|\\
&<\alpha^2+2\lambda_j+\alpha<6\alpha<6\eps.
\end{align*}
Letting $x\in S(\xs,\alpha)$ be arbitrary, one has
\[
\|x-x_{j,k}\|<\eps\quad\text{for all $k\in K_j$,}
\]
thus
\begin{align*}
\bigl|\ys_j(Tx)-\psi_j(T)\bigr|
&=\biggl|\sum_{k=1}^{m_j}\lambda_{j,k}(\Ts\ys_{j,k})(x-x_{j,k})\biggr|\\
&\leq\sum_{k\notin K_j}\lambda_{j,k}\|x-x_{j,k}\|+\sum_{k\in K_j}\lambda_{j,k}\|x-x_{j,k}\|\\
&\leq2\lambda_j+\eps<4\alpha+\eps<5\eps,
\end{align*}
and it follows that
\begin{align*}
\ys_1(Tx)-\ys_2(Tx)
&=\ys_1(Tx)-\psi_1(T)+\psi_1(T)-\phi_1(T)+\phi_1(T)-\phi_2(T)\\
&\quad\quad+\phi_2(T)-\psi_2(T)+\psi_2(T)-\ys_2(Tx)\\
&\geq\phi_1(T)-\phi_2(T)\\
&\quad\quad-\bigl|\ys_1(Tx)-\psi_1(T)\bigr|-\bigl|\psi_1(T)-\phi_1(T)\bigr|\\
&\quad\quad-\bigl|\phi_2(T)-\psi_2(T)\bigr|-\bigl|\psi_2(T)-\ys_2(Tx)\bigr|\\
&>\delta-\eps-5\eps-\alpha^2-\alpha^2-5\eps>\delta-13\eps.
\end{align*}
\smallskip
(b). The proof is similar to that of (a).
\end{proof}

Taking $\delta=2$ in Theorem~\ref{thm: L(X,Y) and Xs non-rough implies Y is weakly rough} we get the following corollary.

\begin{corollary}\label{cor: H is WOH implies Y is WOH}
Let $X$ and $Y$ be Banach spaces, and let $H$ be a closed subspace of $\lxy$ containing the finite rank operators. Suppose that $H$ is weakly octahedral.
\begin{itemize}
\item[{\rm(a)}]
If $\Xs$ is non-rough, then $Y$ is weakly octahedral.
\item[{\rm(b)}]
If $Y$ is non-rough, then $\Xs$ is weakly octahedral.
\end{itemize}
\end{corollary}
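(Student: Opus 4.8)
The plan is to deduce the corollary directly from Theorem~\ref{thm: L(X,Y) and Xs non-rough implies Y is weakly rough} by recognizing that weak octahedrality is exactly weak $2$-average roughness. The crucial observation is that, for any Banach space $Z$, every non-empty relatively weak$^\ast$ open subset of the unit ball $B_{\Zs}$ has diameter at most $2$, since $\diam B_{\Zs}\leq 2$. Therefore, requiring that every such open set have diameter \emph{at least} $2$ is the same as requiring that every such open set have diameter \emph{exactly} $2$. Comparing this with the definition of weak $\delta$-average roughness in the case $\delta=2$ and with the characterization of weak octahedrality in \cite[Theorem~2.8]{haller_duality_2015}, I would conclude that a Banach space is weakly $2$-average rough if and only if it is weakly octahedral.

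With this equivalence established, the corollary follows by specializing Theorem~\ref{thm: L(X,Y) and Xs non-rough implies Y is weakly rough} to $\delta=2$. The standing hypothesis that $H$ is weakly octahedral translates, via the equivalence above, into $H$ being weakly $2$-average rough, which is precisely the hypothesis of the theorem with $\delta=2$. Part~(a) of the theorem then yields that $Y$ is weakly $2$-average rough whenever $\Xs$ is non-rough, and part~(b) yields that $\Xs$ is weakly $2$-average rough whenever $Y$ is non-rough. Re-translating ``weakly $2$-average rough'' back into ``weakly octahedral'' for the target space gives exactly statements~(a) and~(b) of the corollary.

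Because the entire argument is a direct specialization of an already-proved theorem, I do not anticipate any genuine obstacle. The single point that must be checked is the elementary but essential fact that the upper bound $\diam B_{\Zs}\leq 2$ forces the inequality ``$\geq 2$'' appearing in the definition of weak $2$-average roughness to become an equality, which is exactly what matches the definition of weak octahedrality and makes the two notions coincide.
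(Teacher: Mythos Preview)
Your proposal is correct and matches the paper's approach exactly: the paper simply states that the corollary is obtained by taking $\delta=2$ in Theorem~\ref{thm: L(X,Y) and Xs non-rough implies Y is weakly rough}, relying on the identification of weak octahedrality with weak $2$-average roughness via \cite[Theorem~2.8]{haller_duality_2015}. Your explicit justification of that identification (the diameter bound forcing ``$\geq 2$'' to become ``$=2$'') is precisely the point the paper leaves implicit.
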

\bibliographystyle{amsplain}
\footnotesize

\end{document}